\documentclass[preprint,12pt]{article}
\usepackage{amsmath,amssymb,amsthm}
\usepackage{graphicx}
\usepackage{subfigure}
\usepackage{color}
\usepackage{amssymb}
\usepackage{setspace}

\textheight 22cm
\textwidth 14.5cm
\topmargin -4mm
\oddsidemargin 5mm
\evensidemargin 5mm

\newtheorem{thm}{Theorem}[section]

 \newtheorem{lem}{Lemma}[section]
 \newtheorem{prop}{Proposition}[section]
 \newtheorem{defn}{Definition}[section]
\newtheorem{rem}{Remark}[section]

\def\ddj{\dot \Delta_j}

\def\tilde{\widetilde}
\def\hat{\widehat}

\def \zr1{$z_{R,1}$}
\def \zr2{z_{R,2}}
\def \zi1{z_{I,1}}
\def \zi2{z_{I,2}}

\def\Scho{{Schr$\ddot{o}$dinger}}

\def\v{{\tilde v}}
\def\u{{\tilde u}}

\newcommand\R{\mathbb{R}}

\begin{document}
\title{Global existence of Euler-Korteweg equations with the non-monotone pressure}
\author{Zihao Song$^{\mathrm{a},\mathrm{b}}$}
\date{\it \small $^a$ Research Institute for Mathematical Sciences, Kyoto University,\\
  Kyoto 606-8501, Japan\\
 $^b$ School of Mathematics, Nanjing University of Aeronautics and Astronautics,\\
Jiangsu Province, Nanjing 211106, China\\
 E-mail: szh1995@nuaa.edu.cn}

\maketitle
\begin{abstract}
We are concerned with the global solution of the compressible Euler-Korteweg equations in $\mathbb{R}^{3}$. In the case of zero sound speed $P'(\rho^{\ast})=0$, it is found that the perturbation problem of irrotational fluids could be reformulated into a quasi-linear  Schr$\ddot{o}$dinger equation. Based on techniques of dispersive estimates and methods of normal form, we construct a class of global scattering solutions for 3D case.
\end{abstract}
{\bf Keywords:} Well-posedness; Dispersive estimates; Euler-Korteweg system\\
{\bf Mathematical Subject Classification 2020}: 76N10, 35D05, 35Q31

\section{Introduction}\setcounter{equation}{0}
In this paper, we study the compressible Euler-Korteweg model, whose theory formulation of (\ref{1.1}) was first introduced by Van der Waals \cite{V}, Korteweg \cite{K} and the rigorous derivation of corresponding equations for the compressible Euler-Korteweg system reads as:
\begin{equation}
\left\{
\begin{array}{l}\partial_{t}\rho+\mathrm{div}(\rho u)=0,\\ [1mm]
 \partial_{t}(\rho u)+\mathrm{div}(\rho u \otimes u)+\nabla P(\rho)=\mathrm{div}K(\rho).\\[1mm]
 \end{array} \right.\label{1.1}
\end{equation}
Here, $\rho=\rho(t,x)\in \mathbb{R}_{+}$ and $u=u(t,x)\in \mathbb{R}^{3}$ are unknown functions on $[0,+\infty)\times \mathbb{R}^{3}$, which stands for the density
and velocity field of a fluid, respectively. We neglect the thermal fluctuation so that the pressure $P=P(\rho)$ reduces to a function of $\rho$ only. In the following, the Korteweg tensor is given by (see \cite{BDDJ})
$$\mathrm{div}K(\rho)=\rho\nabla\big(\kappa(\rho)\Delta\rho+\frac{1}{2}\kappa'(\rho)|\nabla\rho|^{2}\big),$$
where the capillarity coefficient $\kappa>0$ may depend on $\rho$ in general.

The initial condition of System (\ref{1.1}) is prescribed by
\begin{equation}\label{1.2}
\left(\rho,u\right)|_{t=0}=\left(\rho _{0}(x),u_{0}(x)\right),\  x\in \R^{3}.
\end{equation}
In this paper, we investigate \eqref{1.1}-\eqref{1.2}, where initial data tends to a constant equilibrium $(\rho^{\ast},0)$ with $\rho^{\ast}>0$ if $|x|\rightarrow\infty$.

The model aims to describe the dynamic of a non-dissipative liquid-vapor mixture, where the phase changes are described through the variations of the density. Due to its strong physical background, there is an extensive literature of the (EKP) system in the past decades. Benzoni-Gavage, Danchin and Descombes \cite{BDD0} addressed the local well-posedness in one dimension by the Lagrangian formulation. In their later works \cite{BDD}, the local well-posedness theory and blow-up criterion for the multidimensional Euler-Korteweg system in Sobolev framework were established. For global results in physical dimensions, Antonelli and Marcati \cite{AP1,AP2} gave positive results concerning weak solutions with some particular case  $\kappa(\rho)=\frac{\kappa_{1}}{\rho}$, which corresponds to the so-called quantum fluids. Moreover, special solutions such as travelling wave solutions were constructed in Benzoni-Gavage \cite{B}.

For perturbation solutions near the constant state, it is noticed that if the fluid is irrotational, the linearized system of (\ref{1.1}) reflects a dispersive structure close to the well-known \Scho\, equation (NLS) reads as
\begin{equation}\label{Scho model}
i\partial_{t}\omega+\Delta\omega=\mathcal{N}(\omega).
\end{equation}
Fruitful mathematical results were established for (\ref{Scho model}) with various approaches in dispersive equations, such as Strichartz estimates \cite{CW,KP,Sh,St}, vector fields \cite{HN,GH} and normal forms \cite{C,GMS1,GMS2,W}. The appearance of dispersive structure for (\ref{1.1})-(\ref{1.2}) makes it possible to study small data/global problems near the equilibrium. In Audiard and Haspot \cite{AH,AH2}, authors studied the case when pressure is monotonically increasing on the constant equilibrium, i.e. $P'(\rho^*)>0$ where the perturbation equation could be reformulated into a dispersive system which looks alike to the Gross-Pitaevskii equation. Based on the normal form method applied in Gustafson, Nakanishi and Tsai \cite{GNT1,GNT2}, they established the global-in-time existence and uniqueness of small perturbation solutions for irrotational fluids in $d\geq3$ through delicate analysis of space time resonances.

On the other hand, owing to the fact that the Korteweg system was deduced by using Van der Waals potential, there exists non-monotone pressures due to the phase transition, see for example Kobayashi and Tsuda \cite{KT} for details. Therefore, it is also important to investigate the more physical cases of $P'(\rho^{\ast})=0$ (zero sound speed) and $P'(\rho^{\ast})<0$ in the mathematical analysis of \eqref{1.1}-\eqref{1.2}. In viscous cases, the linearized system shares parabolic mechanics thanks to the three order term of density, positive results concern global well-posedness were established \cite{CK,KT,MS,SX}.

However, there are few results concern global existence for inviscous Korteweg flows (Euler-Korteweg system) with vanishing sound speed at the constant equilibrium. In the present paper we investigate the zero sound speed case, where it is observed the perturbation system of \eqref{1.1} would be reformulated into a quasi-linear \Scho\, equation with quadratic nonlinearities. We would construct a class of global scattering solutions in three dimensions, which is actually regarded as a final data problem.

\subsection{Main results and strategy}

We state the following main result, which constructs a class of global unique solutions on $[0,\infty]$ which scatter asymptotically:
\begin{thm}\label{thm2.3}
Let $P'(\rho^{\ast})=0$. Assume $\varphi$ irrotational and $\varphi\in H^{3s+7}\cap\dot{H}^{-2}\cap B^{1}_{1,1}$, $x^{2}\varphi\in H^{1}$ for $s>\frac{5}{2}$. If $\varphi$ fulfills
\begin{eqnarray}
\|\varphi\|_{H^{3s+7}\cap\dot{H}^{-2}\cap B^1_{1,1}}+\|x^{2}\varphi\|_{H^{1}}\ll1,
\end{eqnarray}
then there exists a unique global-in-time solution $(\rho-\rho^*,u)$ for (\ref{1.1}) on $[0,\infty] $satisfying
$$(\nabla (\rho-\rho^*), u)\in \mathcal{C}([0,\infty],H^{s}\cap \dot{H}^{-1})$$
while
$$\rho-\rho^*=\delta\mathrm{Re}e^{it\Delta}\varphi+\mathrm{Re}\mathcal{Z};\,\,u=\mathrm{Im}e^{it\Delta}\nabla\varphi+\mathrm{Im}\nabla \mathcal{Z},$$
where $\delta$ is some positive constant while $\mathcal{Z}$ fulfills for any $t>0$
$$\|\mathrm{Re} \mathcal{Z}\|_{L^2}\lesssim t^{-\frac{3}{2}};\,\,\|\mathcal{Z}\|_{\dot{H}^{s}\cap\dot{H}^{1}}\lesssim t^{-\frac{3}{2}}.$$
\end{thm}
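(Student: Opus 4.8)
\medskip
\noindent\textbf{Proof strategy.}

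\emph{Step 1 (reduction to a quasi-linear Schr\"odinger equation).} Since $\nabla\times u_{0}=0$, the Euler--Korteweg dynamics propagates irrotationality (cf. \cite{BDD}), so we write $u=\nabla\phi$ and $r=\rho-\rho^{\ast}$. The continuity equation becomes $\partial_{t}r+\rho^{\ast}\Delta\phi=-\mathrm{div}(r\nabla\phi)$, and the Bernoulli form of the momentum equation (divide by $\rho$, then integrate the resulting gradient) gives
\[
\partial_{t}\phi=\kappa(\rho^{\ast})\Delta r-\widetilde P(\rho)+\big(\kappa(\rho)-\kappa(\rho^{\ast})\big)\Delta r+\tfrac12\kappa'(\rho)|\nabla r|^{2}-\tfrac12|\nabla\phi|^{2},
\]
where $\widetilde P$ is the enthalpy-type primitive normalised by $\widetilde P(\rho^{\ast})=0$; because $P'(\rho^{\ast})=0$ we also have $\widetilde P\,'(\rho^{\ast})=0$, so $\widetilde P(\rho)=O(r^{2})$ and the linear coupling between $r$ and $\phi$ is carried purely by the two Laplacians. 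Setting $\psi=\Lambda r+i\phi$ with $\Lambda=\sqrt{\kappa(\rho^{\ast})/\rho^{\ast}}$ and rescaling time by $\sqrt{\kappa(\rho^{\ast})\rho^{\ast}}$ diagonalises the system into
\begin{equation}\label{eq:qls}
i\partial_{t}\psi+\Delta\psi=\mathcal N(\psi,\bar\psi)=\mathcal N_{2}(\psi,\bar\psi)+\mathcal N_{\geq3}(\psi,\bar\psi),
\end{equation}
where $\mathcal N_{2}$ gathers all quadratic terms, schematically $\psi\,\nabla^{2}\psi$, $\nabla\psi\cdot\nabla\psi$ and $\psi^{2}$ in every conjugation pattern, and $\mathcal N_{\geq3}$ is cubic and higher order; the term $\psi\,\nabla^{2}\psi$ is precisely what makes \eqref{eq:qls} genuinely \emph{quasi-linear}, the nonlinearity carrying as many derivatives as the dispersive operator.

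\emph{Step 2 (final-data Duhamel formulation).} Prescribing the asymptotic profile $\delta\varphi$ (the constant $\delta$ issuing from the normalisation $\psi\leftrightarrow(r,\phi)$), we look for $\psi=\delta e^{it\Delta}\varphi+\mathcal Z$ with
\begin{equation}\label{eq:duhamel}
\mathcal Z(t)=i\int_{t}^{\infty}e^{i(t-s)\Delta}\,\mathcal N\big(\psi(s),\bar\psi(s)\big)\,ds ,
\end{equation}
and solve \eqref{eq:duhamel} on $[1,\infty)$ by a contraction argument in a complete metric space $X$ whose norm combines: a uniform top bound $\|\mathcal Z(t)\|_{H^{\sigma}}$ with $\sigma=3s+7$; the decaying bounds $t^{3/2}\|\mathcal Z(t)\|_{\dot H^{s}\cap\dot H^{1}}$ and $t^{3/2}\|\mathrm{Re}\,\mathcal Z(t)\|_{L^{2}}$; and a uniform weighted (vector-field) bound on the profile $e^{-it\Delta}\psi(t)$, which is finite at $t=1$ since $x^{2}\varphi\in H^{1}$. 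On the compact interval $[0,1]$ the solution is furnished by the $H^{\sigma}$ local well-posedness theory for \eqref{1.1} (\cite{BDD}) together with smallness, and matched at $t=1$; thus only $t\geq1$ is at stake.

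\emph{Step 3 (dispersive estimates, resonances, and closing the iteration).} The linear term obeys $\|e^{it\Delta}\varphi\|_{L^{\infty}}\lesssim t^{-3/2}\|\varphi\|_{B^{1}_{1,1}}$, complemented by the weighted bounds drawn from $x^{2}\varphi\in H^{1}$; the $\dot H^{-2}$ hypothesis gives the low-frequency room to place $\nabla\varphi\in\dot H^{-1}$ and to make the normal-form and Duhamel terms integrable near $\xi=0$. For $\mathcal N_{2}$ we use the space--time resonance / normal-form method: the relevant phases, according to the conjugation pattern, are $|\xi|^{2}\mp|\eta|^{2}\mp|\xi-\eta|^{2}$, and one computes $\nabla_{\eta}(|\xi|^{2}-|\eta|^{2}+|\xi-\eta|^{2})=-2\xi$ (nonzero off the origin) and $\nabla_{\eta}(|\xi|^{2}-|\eta|^{2}-|\xi-\eta|^{2})=2(\xi-2\eta)$, which vanishes only at $\eta=\xi/2$, where the phase equals $|\xi|^{2}/2$. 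Hence all space--time resonances concentrate at $\xi=0$, which is the null structure already exploited for Gross--Pitaevskii and for the $P'(\rho^{\ast})>0$ case in \cite{GNT1,GNT2,AH}. A normal-form change $\psi=w+B(w,w)$ then turns $\mathcal N_{2}$ into (i) boundary terms $e^{it\Delta}B(\varphi,\varphi)$, controlled by the $t^{-3/2}$ decay, (ii) cubic-in-time bulk terms $B(\mathcal N(w),w)$, which are time-integrable, and (iii) a purely low-frequency remainder absorbed by $\dot H^{-1}\cap\dot H^{-2}$; separately, a modified (normal-form) energy estimate at level $H^{\sigma}$ controls $\|\psi\|_{H^{\sigma}}$ without derivative loss, the commutator budget accounting for the choice $\sigma=3s+7$. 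Putting these together gives $\|\mathcal N(\psi(s),\bar\psi(s))\|_{\dot H^{s}\cap\dot H^{1}}\lesssim s^{-3/2}(\eta+\|\mathcal Z\|_{X})^{2}$ and the analogous bound for the real part in $L^{2}$; the cubic and low-frequency pieces contribute $O(t^{-2})$ to $\mathcal Z$, while the normal-form boundary term $B(\psi,\psi)(t)$ --- carrying one $L^{\infty}$-decaying factor --- contributes exactly $O(t^{-3/2})$, and the same for $\mathrm{Re}\,\mathcal Z$ in $L^{2}$. For $\eta$ small this makes \eqref{eq:duhamel} a contraction on $X$, whose unique fixed point is $\mathcal Z$. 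Undoing $\psi=\Lambda r+i\phi$ and the time rescaling recovers $(\rho-\rho^{\ast},u)=(r,\nabla\phi)$ with $\rho-\rho^{\ast}=\delta\,\mathrm{Re}\,e^{it\Delta}\varphi+\mathrm{Re}\,\mathcal Z$ and $u=\mathrm{Im}\,e^{it\Delta}\nabla\varphi+\mathrm{Im}\,\nabla\mathcal Z$; the $X$-bounds are precisely $\|\mathrm{Re}\,\mathcal Z\|_{L^{2}}\lesssim t^{-3/2}$ and $\|\mathcal Z\|_{\dot H^{s}\cap\dot H^{1}}\lesssim t^{-3/2}$, while $(\nabla r,u)\in\mathcal{C}([0,\infty],H^{s}\cap\dot H^{-1})$ (including the scattering limit) follows from the uniform bounds and dominated convergence in \eqref{eq:duhamel}.

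\emph{Main obstacle.} The crux is the quasi-linear loss of derivatives in $\mathcal N_{2}$: \eqref{eq:duhamel} cannot be iterated directly since $e^{it\Delta}$ gains no smoothness, and the naive normal-form operator $B$ is itself unbounded on $H^{\sigma}$ near the region $\eta\parallel\xi$. The remedy is the interplay of (a) the null structure above (space--time resonances only at $\xi=0$, whence the low-frequency hypotheses $\dot H^{-2}$), (b) a paradifferential normal form together with a modified-energy argument carried out at the very high regularity $H^{3s+7}$, so that the unavoidable commutator costs become affordable, and (c) the weighted control $x^{2}\varphi\in H^{1}$, which supplies the $\partial_{\xi}$-integrations needed to exploit space-nonresonance and to upgrade the time decay of $\mathcal Z$ to $t^{-3/2}$. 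Checking that the normal-form and energy estimates genuinely close --- that no derivative is irretrievably lost in the resonant sector --- is the technical heart of the argument.
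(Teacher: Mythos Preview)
Your overall strategy --- reformulation as quasi-linear NLS, final-data problem, space--time resonances for the quadratic terms, high-regularity energy control --- matches the paper's in spirit, but there are two substantive divergences, one methodological and one a genuine gap.

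On methodology: you propose to solve the Duhamel integral by a \emph{contraction} in a weighted space $X$, yet (as you note) the quasi-linear term $\psi\nabla^{2}\psi$ forbids direct fixed-point iteration. The paper does not run a contraction at all. It poses approximate problems with data $e^{iT_{n}\Delta}\varphi$ at a \emph{finite} time $T_{n}$, obtains existence backward on $[1,T_{n}]$ via the local theory and blow-up criterion of \cite{BDD}, closes uniform-in-$n$ a priori bounds by an energy method, and then lets $T_{n}\to\infty$. The top-order derivative loss is handled not by a paradifferential normal form but by the \emph{gauge-weight} energy method of \cite{BDD}: one tests against $\phi(\rho)\Delta^{\gamma}(\tilde v,\tilde u)$ with $\phi$ solving $\tfrac{a(\rho)}{\rho}+2\gamma a'(\rho)-2a(\rho)\tfrac{\phi'}{\phi}=0$, which makes the worst commutators cancel exactly. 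Moreover, the bootstrap is not on $\mathcal Z=z-z_{1}$ but on $\tilde z=z-z_{1}-z_{2}$, where the \emph{second approximation} $z_{2}=\int_{T_{n}}^{t}e^{i(t-s)\Delta}\mathcal N(z_{1})\,ds$ is estimated separately; this is what makes the energy integrand integrable in time.

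The genuine gap is your treatment of the $|z|^{2}$-type resonance. You correctly locate the space--time resonant set at $\{\xi=0\}$, but the normal-form symbol for $\bar z z$ is $|\xi|^{-2}$, and asserting that the ``low-frequency remainder is absorbed by $\dot H^{-1}\cap\dot H^{-2}$'' does not close: the singularity is too strong for an $L^{2}$ bound and the normal form is genuinely unbounded there. The paper's fix is a trick of Hayashi--Naumkin exploiting that only $\mathrm{Re}\,z_{2}$ is needed in $L^{2}$:
\[
\mathrm{Re}\Big(i\int_{T_{n}}^{t}e^{i(t-s)\Delta}|z_{1}|^{2}\,ds\Big)=\mathrm{Im}\Big(\int_{T_{n}}^{t}\big(e^{i(t-s)\Delta}-1\big)|z_{1}|^{2}\,ds\Big),
\]
after which the mean-value theorem supplies a factor $(t-s)\Delta$, and the identity $\nabla(\bar f g)=\tfrac{1}{it}\big(\bar f\,e^{it\Delta}xe^{-it\Delta}g-g\,\overline{e^{it\Delta}xe^{-it\Delta}f}\big)$ converts $\Delta|z_{1}|^{2}$ into weighted terms controlled by $x^{2}\varphi\in H^{1}$, yielding the $t^{-3/2}$ decay. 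Without this device (or an equivalent), the $L^{2}$ bound on $\mathrm{Re}\,\mathcal Z$ in your scheme does not close.
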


\begin{rem}
Theorem \ref{thm2.3} partially answers the global existence of Cauchy problem for (\ref{1.1})-(\ref{1.2}) in 3D case. Moreover, we prove there exists plenty of global solutions scattering to some free solutions in $L^2$ space where
\begin{eqnarray}
\|\rho-\rho^*-\delta\mathrm{Re}e^{it\Delta}\varphi\|_{L^2}\lesssim t^{-\frac{3}{2}};\,\|u-\mathrm{Im}e^{it\Delta}\nabla\varphi\|_{L^2}\lesssim t^{-\frac{3}{2}}.
\end{eqnarray}
The proof bases on studying a final data problem of (\ref{1.1}), which could be reformulated into a quasi-linear \Scho\,equation. We remark that it might be more difficult and interesting to study general $L^2$ initial data for Cauchy problem, as is pointed by Ikeda and Inui \cite{II}, the solution for 3D quadratic \Scho\, blows up in polynomial time with certain type nonlinearities.
\end{rem}

In the rest of this section, we illustrate the sketch of our proof. The linearization of (\ref{1.1}) and a very quick calculation for spectrum of (\ref{linearized}) shows that the linearized system evolutes as a quasi-linear {\Scho}  equation with quadratic and cubic nonlinearities. For the constructions of global solutions in 3D case, we shall actually study a final data problem for the following system:
\begin{equation}
\left\{
\begin{array}{l}\partial_{t}z-i\Delta z=\mathcal{N}^{2}(z)+\mathcal{N}^{3}(z),\\ [1mm]
\lim\limits_{t\rightarrow\infty}e^{-i\Delta t}z(t)=\varphi.\\[1mm]
 \end{array} \right.
\end{equation}
Due to the quasi-linearity, our strategy is to consider the energy method for solutions of some approximate equations with the final data on a sequence of time $T_{n}$. In order to overcome the derivative loss problem from those quasi-linear terms and non-transport term, we will introduce the gauge function and a modified density, which originally applied in \cite{BDD}.

Then the main task might be gaining enough decay to ensure the boundedness of $L^1_{[1,T_{n}]}L^\infty$ norm, which is essential for the evolution of Sobolev norm. Utilizing Duhamel formula, the final  data problem of (EK) model might be given as:
\begin{eqnarray}
z= z_{1}+i\int^{t}_{T_{n}}e^{i(t-s)\Delta}\big(\mathcal{N}^{2}(z)+\mathcal{N}^{3}(z)\big)ds,
\end{eqnarray}
where $z_{1}=e^{iT_{n}\Delta}\varphi$ represents the linear profile of the fixed function. The key ingredient of decay estimates is to establish the estimates for the following second approximation
$$z_{2}\triangleq \int^{t}_{T_{n}}e^{i(t-s)\Delta}\big(\mathcal{N}^{2}( z_{1})+\mathcal{N}^{3}( z_{1})\big)ds.$$
However, due to the dispersive estimates for \Scho\,operator, at the first glance, it is more direct to gain $t^{-\frac{1}{2}}$ decay from $z_{2}$. Hence one must improve the decay to ensure the integrability of time on $[1,T_{n}]$, which relies on a more delicate analysis on structure of quadratic nonlinearities. Indeed, one may roughly write the quadratic terms as
\begin{eqnarray}
\mathcal{N}^{2}(z)=\mathcal{N}(\nabla z)+\mathrm{Re}z\mathrm{Re}z+\mathrm{Re}z\Delta z.
\end{eqnarray}
An important observation is that those quadratic nonlinearities are either equipped with derivatives or they are all interactions of the real part of the variable $z$.

Therefore, for bounding the second approximation with derivatives, we shall apply the space-time resonance method introduced in \cite{GMS1,GMS2}. The idea is to estimate the space-time integral
$\int^{t}_{0}e^{is\Omega_{i}}B[\varphi,\varphi]ds$, $\Omega_{i}$ are equipped with Fourier symbols $\hat\Omega_{i}=|\xi|^2\pm|\eta|^2\pm|\xi-\eta|^2$, under the different type resonance sets:
\begin{itemize}
\item the time resonance set: $\mathcal{T}:=\{(\xi,\eta); \hat\Omega_{i}(\xi,\eta)=0\};$
\item the space resonance set: $\mathcal{S}:=\{(\xi,\eta); \nabla_{\eta}\hat\Omega_{i}(\xi,\eta)=0\};$
\item the time-space resonance set: $\mathcal{R}:=\{(\xi,\eta); \hat\Omega_{i}(\xi,\eta)=0\}\cap\{\nabla_{\eta}\hat\Omega_{i}(\xi,\eta)=0\}.$
\end{itemize}
We will integrate by parts either in time variable $s$ for non-time resonance set or in Fourier variable $\eta$ for non-space resonance set and we naturally the main contribution comes from space-time resonance $\mathcal{R}$. For \Scho\,\, equation, the main obstacle corresponds to the ``large" set of space-time resonance set $\xi=0$ for $|z|^2$ type nonlinearities. Precisely, integral by part on either variable would bring singularity near zero frequencies. However, thanks to the derivative in the second approximation, at least we are able to overcome those degeneracies in 3D.

On the other hand, for the real part of the second approximation, space-time resonance method might fail since degeneracies might be too strong to control. Our main idea is inspired by works of Hayashi and Naumkin \cite{HN0}. Indeed, we will utilize the following crucial equality
\begin{eqnarray*}
\mathrm{Re}\big(i\int^{t}_{T_{n}}e^{i(t-s)\Delta}|e^{is\Delta}\varphi|^2ds\big)=\mathrm{Im}\big(\int^{t}_{T_{n}}(e^{i(t-s)\Delta}-1)|e^{is\Delta}\varphi|^2ds\big)
\end{eqnarray*}
where the mean value theory allows us to the gain additional $t^{-1}$ decay in $L^2$ space.

Consequently, one is able to find the second approximation decay at rate $t^{-\frac{3}{2}}$ in the suitable space and this is enough to close the whole energy estimates. At last, we are able to establish the existence and uniqueness for sequence equation on $[1,T_{n}]$. Furthermore, one may extend the well-posedness to $t=0$ by the local theory established in \cite{BDD} and $t=\infty$ by a limit process on sequences $T_{n}$.

Finally, the rest of this paper unfolds as follows: in Section 2, we briefly recall the Littlewood-Paley theory and give the main bootstrap argument. Section 3 is devoted to present the second approximation of linear profile. Estimates for the main bootstrap would be given in Section 4. Finally we prove Theorem \ref{thm2.3} in Section 5.

\section{Preliminaries}\setcounter{equation}{0}

Throughout the paper, $C>0$ stands for a generic ``constant". For brevity, $f\lesssim g$ means that $f\leq Cg$. It will also be understood that $\|(f,g)\|_{X}=\|f\|_{X}+\|g\|_{X}$ for all $f,g\in X$. For $ 1\leq p\leq \infty$, we denote by $L^{p}=L^{p}(\mathbb{R}^{d})$ the usual Lebesgue space on $\mathbb{R}^{d}$ with the norm $\|\cdot\|_{L^{p}}$.

For $k\in\mathbb{N}$, we denote inhomogeneous Sobolev space by $W^{k,p}$ and homogeneous one with $\dot{W}^{k,p}$. If $p=2$, as usual we denote Bessel potential space $H^k$ and Riesz potential space $\dot{H}^k$ with $s\in\mathbb{R}$, where for $s$ a positive integer they coincide with the usual Sobolev spaces, i.e. $W^{k,p}=H^k$, $\dot{W}^{k,p}=\dot{H}^k$. The following product estimates concern Sobolev space would be frequently used through our paper:
$$\forall k\in\mathbb{N},\qquad\|fg\|_{W^{k,p}}\lesssim \|f\|_{L^{\infty}}\|g\|_{W^{k,p}}+\|f\|_{W^{k,p}}\|g\|_{L^{\infty}}.$$
Moreover, we have standardly for $F$ smooth satisfying $F(0)=0$, there holds for $u\in L^\infty\cap W^{k,p}$ such that
$$\|F(u)\|_{W^{k,p}}\lesssim C(\|u\|_{L^\infty})\|u\|_{W^{k,p}}.$$

\subsection{Littlewood-Paley theory and Besov spaces}
For convenience of reader, we would like to recall the Littlewood-Paley decomposition, Besov spaces and related analysis tools. The reader is referred to Chap.2 and Chap.3 of \cite{BCD} for more details. Let $\chi$ be a smooth function valued in $[0,1]$, such that $\chi$ is supported in the ball
$\mathbf{B}(0,\frac{4}{3})=\{\xi\in\mathbb{R}^{d}:|\xi|\leq\frac{4}{3}\}$. Set $\varphi(\xi)=\chi(\xi/2)-\chi(\xi)$. Then $\varphi$
is supported in the shell $\mathbf{C}(0,\frac{3}{4},\frac{8}{3})=\{\xi\in\mathbb{R}^{d}:\frac{3}{4}\leq|\xi|\leq\frac{8}{3}\}$ so that
$$\sum_{q\in\mathbb{Z}}\varphi(2^{-j}\xi)=1, \quad \forall\xi\in\mathbb{R}^{d}\backslash\{{0}\}.$$

For any tempered distribution $f\in\mathcal{S}'$, one can define the homogeneous dyadic blocks and homogeneous low-frequency cut-
off operators:
\begin{eqnarray*}
&&\dot{\Delta}_{j}f:=\varphi(2^{-j}D)f=\mathcal{F}^{-1}(\varphi(2^{-j}\xi)\mathcal{F}f), \quad j\in\mathbb{Z};
\end{eqnarray*}
\begin{eqnarray*}
&&\dot{S}_{j}f:=\chi(2^{-j}D)f=\mathcal{F}^{-1}(\chi(2^{-j}\xi)\mathcal{F}f), \quad j\in\mathbb{Z}.
\end{eqnarray*}

Denote by $\mathcal{S}'_{0}:=\mathcal{S'}/\mathcal{P}$ the tempered distributions modulo polynomials $\mathcal{P}$. As we known, the homogeneous
Besov spaces can be characterised in terms of the above spectral cut-off blocks.

\begin{defn}\label{defn2.1}
For $s\in \mathbb{R}$ and $1\leq p,r\leq \infty$, the homogeneous Besov spaces $\dot{B}^s_{p,r}$ are defined by
$$\dot{B}^s_{p,r}:=\Big\{f\in \mathcal{S}'_{0}:\|f\|_{\dot{B}^s_{p,r}}<\infty  \Big\} ,$$
where
\begin{equation*}
\|f\|_{\dot{B}^s_{p,r}}:=\Big(\sum_{q\in\mathbb{Z}}(2^{qs}\|\dot{\Delta}_qf\|_{L^{p}})^{r}\Big)^{1/r}
\end{equation*}
with the usual convention if $r=\infty$.
\end{defn}

We often use the following classical properties of Besov spaces (see \cite{BCD}):

$\bullet$ \ \emph{Bernstein inequality:}
Let $1\leq p\leq q\leq\infty$. Then for any $\beta,\gamma\in(\mathbb{N}^+)^d$, we have a constant $C$ independent of $f,j$ such that
$$supp\widehat f\subseteq\{A_{1}2^j\leq|\xi|\leq A_{2}2^j\} \Rightarrow C_{\mu}2^{j|\mu|}\|f\|_{L^p}\leq \sup_{\mu}\|\partial^\mu f\|_{L^p}\leq C_{\mu}2^{j|\mu|}\|f\|_{L^p},$$
$$supp\widehat f\subseteq\{A_{1}2^j\leq|\xi|\leq A_{2}2^j\} \Rightarrow \| f\|_{L^q}\leq C2^{-dj(\frac{1}{p}-\frac{1}{q})}\|f\|_{L^p}.$$

$\bullet$ \ \emph{Interpolation:}
The following inequality is satisfied for $1\leq p,r_{1},r_{2}, r\leq \infty, \sigma_{1}\neq \sigma_{2}$ and $\theta \in (0,1)$:
$$\|f\|_{\dot{B}_{p,r}^{\theta \sigma_{1}+(1-\theta )\sigma_{2}}}\lesssim \|f\| _{\dot{B}_{p,r_{1}}^{\sigma_{1}}}^{\theta} \|f\|_{\dot{B}_{p,r_2}^{\sigma_{2}}}^{1-\theta }$$
with $\frac{1}{r}=\frac{\theta}{r_{1}}+\frac{1-\theta}{r_{2}}$.

The product estimates in Besov spaces play a fundamental role. Let us first introduce Bony decomposition where for any distribution $f$ and $g$ such that
$$fg=T_{f}g+R(f,g)+T_{g}f$$
where paraproducts and remainders are defined as follow
$$T_{f}g\triangleq\sum_{j'\in \mathbb{Z}}\dot{S}_{j'-1}f\dot{\Delta}_{j'}g,\qquad R(f,g)\triangleq\sum_{j'\in \mathbb{Z}}\tilde{\dot{\Delta}}_{j'}f\dot{\Delta}_{j'}g.$$
where $\tilde{\dot{\Delta}}_{j'}=\sum\limits_{|j'-k|\leq1}{\dot{\Delta}}_{k}$.

Based on Bony decomposition, we immediately have the following Proposition concerns bounding bilinear terms in \eqref{linearized} (see \cite{BCD})
\begin{lem}\label{product}
Let $s_{1},s_{2}\in\mathbb{R}$ and $1\leq p,p_{1},q\leq\infty$. Then if $s_{1}<\frac{d}{p_{1}}$, then it holds
$$\|T_{a}b\|_{{\dot B}^{s_{1}+s_{2}-\frac{d}{p_{1}}}_{p,q}}\lesssim\|a\|_{{\dot B}^{s_{1}}_{p_{1},\infty}}\|b\|_{{\dot B}^{s_{2}}_{p,q}}.$$
Moreover, if $s_{1}+s_{2}> d\max\{0, \frac{2}{p}-1\}$, then it holds
$$\|R(a,b)\|_{{\dot B}^{s_{1}+s_{2}-\frac{d}{p_{1}}}_{p,q}}\lesssim\|a\|_{{\dot B}^{s_{1}}_{p_{1},\infty}}\|b\|_{{\dot B}^{s_{2}}_{p,q}}.$$
\end{lem}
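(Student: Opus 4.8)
The estimate is the classical Bony paraproduct/remainder inequality on homogeneous Besov spaces (Chapter~2 of \cite{BCD}), and my plan is simply to run the standard Littlewood--Paley argument. Throughout write $d_{j'}:=2^{j's_{2}}\|\dot{\Delta}_{j'}b\|_{L^{p}}$, so that $(d_{j'})_{j'\in\mathbb{Z}}\in\ell^{q}$ with norm $\|b\|_{\dot{B}^{s_{2}}_{p,q}}$. For the paraproduct $T_{a}b=\sum_{j'}\dot{S}_{j'-1}a\,\dot{\Delta}_{j'}b$, the first step is the spectral remark that $\dot{S}_{j'-1}a\,\dot{\Delta}_{j'}b$ (a low-frequency ball times a shell $\sim 2^{j'}$) has Fourier support in an annulus of size $\sim 2^{j'}$, so $\dot{\Delta}_{j}(\dot{S}_{j'-1}a\,\dot{\Delta}_{j'}b)=0$ unless $|j-j'|\le N_{0}$ for an absolute $N_{0}$; in particular the defining series is locally finite in frequency and converges in $\mathcal{S}'$. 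Next I would use Bernstein and the hypothesis $s_{1}<\frac{d}{p_{1}}$ to sum a geometric series,
$$\|\dot{S}_{j'-1}a\|_{L^{\infty}}\lesssim\sum_{k\le j'-2}2^{kd/p_{1}}\|\dot{\Delta}_{k}a\|_{L^{p_{1}}}\lesssim\Big(\sum_{k\le j'-2}2^{k(d/p_{1}-s_{1})}\Big)\|a\|_{\dot{B}^{s_{1}}_{p_{1},\infty}}\lesssim 2^{j'(d/p_{1}-s_{1})}\|a\|_{\dot{B}^{s_{1}}_{p_{1},\infty}},$$
where convergence is exactly where $d/p_{1}-s_{1}>0$ is used. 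Combining with H\"older's inequality, the $L^{p}$-boundedness of $\dot{\Delta}_{j}$, and the localization $|j-j'|\le N_{0}$, and renormalizing by $2^{j(s_{1}+s_{2}-d/p_{1})}$, one is left with a convolution of $(d_{j'})\in\ell^{q}$ against a finitely supported kernel; taking the $\ell^{q}(\mathbb{Z})$ norm in $j$ and applying Young's inequality for series gives the first bound, with no restriction on $s_{2}$.

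For the remainder $R(a,b)=\sum_{j'}\widetilde{\dot{\Delta}}_{j'}a\,\dot{\Delta}_{j'}b$, the feature to exploit is that $\widetilde{\dot{\Delta}}_{j'}a\,\dot{\Delta}_{j'}b$ is spectrally supported in a \emph{ball} of size $\sim 2^{j'}$ rather than a shell, so $\dot{\Delta}_{j}(\widetilde{\dot{\Delta}}_{j'}a\,\dot{\Delta}_{j'}b)=0$ now only forces $j'\ge j-N_{1}$ and the $j'$-sum becomes one-sided. For each summand I would estimate by H\"older in $L^{m}$, $\frac{1}{m}=\frac{1}{p_{1}}+\frac{1}{p}$, and then use Bernstein to push the frequency-$2^{j}$ component from $L^{m}$ to $L^{p}$ at the cost $2^{jd(1/m-1/p)}=2^{jd/p_{1}}$, obtaining
$$\|\dot{\Delta}_{j}(\widetilde{\dot{\Delta}}_{j'}a\,\dot{\Delta}_{j'}b)\|_{L^{p}}\lesssim 2^{jd/p_{1}}\|\widetilde{\dot{\Delta}}_{j'}a\|_{L^{p_{1}}}\|\dot{\Delta}_{j'}b\|_{L^{p}}\lesssim 2^{jd/p_{1}}2^{-j'(s_{1}+s_{2})}\|a\|_{\dot{B}^{s_{1}}_{p_{1},\infty}}d_{j'}.$$
Summing over $j'\ge j-N_{1}$ and renormalizing by $2^{j(s_{1}+s_{2}-d/p_{1})}$ leaves a convolution of $(d_{j'})$ against $k(\ell)=2^{\ell(s_{1}+s_{2})}\mathbf{1}_{\{\ell\le N_{1}\}}$, and $k\in\ell^{1}(\mathbb{Z})$ precisely when $s_{1}+s_{2}>0$ --- which is forced by the hypothesis $s_{1}+s_{2}>d\max\{0,\tfrac{2}{p}-1\}$; the $\max\{0,\cdot\}$ part supplies the extra regularity one needs in the low-integrability range, where $m<1$ so $L^{m}$ is not Banach and $\dot{\Delta}_{j}$ is not bounded on it, and the band-limited substitute of the $\dot{\Delta}_{j}$ estimate (as in \cite{BCD}) must be used; this same margin also guarantees that the partial sums $\sum_{|j'|\le M}\widetilde{\dot{\Delta}}_{j'}a\,\dot{\Delta}_{j'}b$ are Cauchy in $\mathcal{S}'_{0}$, so that $R(a,b)$ is well defined. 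A final application of Young's inequality in $j$ closes the second bound.

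No step is genuinely hard, but the delicate point --- and the main thing to watch --- is the remainder: because each dyadic block of $R(a,b)$ is localized in a ball, the output frequency $j$ is summed one-sidedly against larger $j'$, and the whole estimate rests on the strict positivity $s_{1}+s_{2}>0$ making the discrete kernel $k$ summable, with the stronger form $s_{1}+s_{2}>d\max\{0,\tfrac{2}{p}-1\}$ additionally ensuring convergence of the remainder series in $\mathcal{S}'_{0}$ when $p<2$. The paraproduct is by comparison routine: it is unconditional in $s_{2}$ and needs only the low-frequency summability condition $s_{1}<\frac{d}{p_{1}}$.
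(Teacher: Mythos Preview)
Your argument is the standard Littlewood--Paley proof and is correct. Note, however, that the paper does not actually prove this lemma: it is stated with a reference to \cite{BCD} (Bahouri--Chemin--Danchin, Chapter~2), where precisely the argument you outline appears. So there is nothing to compare --- your sketch supplies what the paper takes for granted. One small remark: in the remainder estimate, the low-integrability threshold should in general read $s_{1}+s_{2}>d\max\{0,\tfrac{1}{p_{1}}+\tfrac{1}{p}-1\}$ rather than $d\max\{0,\tfrac{2}{p}-1\}$ (the two coincide when $p_{1}=p$); your handling of the $m<1$ case via band-limited Bernstein is the right idea, and the discrepancy is in the lemma's stated hypothesis rather than in your reasoning.
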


\subsection{Linearization and bootstrap argument}

At the last subsection, we would like to present the linearization of (\ref{1.1}). For clarify, let the reference density $\rho^{*}=1$ such that $P'(1)=0$. Moreover, without loss of generality, we shall assume $\kappa(1)=1$. To symmetrize the system, we introduce the new variable $\ell=\mathcal{L}(\rho)$ where
\begin{eqnarray}\label{def ell}
\mathcal{L}(\rho)=\int^{\rho}_{1}\sqrt{\frac{\kappa(s)}{s}}ds
\end{eqnarray}
and the potential $\psi$ such that $\nabla\psi=u$, then the corresponding extend system of perturbation variable $(\ell,\psi)$ could be rewritten as
\begin{equation}\label{linearized}
\left\{
\begin{array}{l} \partial_{t}\ell+\Delta \psi=-\nabla\psi\cdot \nabla \ell-\tilde{a}(\ell)\Delta\psi,\\[1mm]
\partial_{t}\psi-\Delta \ell=\frac{1}{2}|\nabla \ell|^2-\frac{1}{2}|\nabla \psi|^2+\tilde{a}(\ell)\Delta\ell-\tilde{g}(\ell).\\ [1mm]
 \end{array} \right.
\end{equation}
Here, $\tilde{g}(\ell)=g\circ\mathcal{L}^{-1}(\ell)$ where $g$ is the bulk chemical potential of the fluid (see \cite{BDD}), by definition such that
$$\frac{dP}{d\rho}=\rho\frac{dg}{d\rho}$$
while $\tilde{a}(\ell)=a\circ\mathcal{L}^{-1}(\ell)-1$ where $a(\rho)=\sqrt{\rho\kappa(\rho)}$. $\tilde{a}(\ell)$ and $\tilde{g}(\ell)$ are assumed to be smooth and vanishing at zero point. Moreover, because of the zero sound speed condition, $\tilde{g}$ also satisfies $\tilde{g}'(0)=0$ which indicates $\tilde{g}(\ell)$ could be regarded as a nonlinear term. To prove the final data problem of (\ref{linearized}), we shall consider the following approximate equations concern $(\ell_{n},\psi_{n})$:
\begin{equation}\label{Appro}
\left\{
\begin{array}{l} \partial_{t}\ell_{n}+\Delta \psi_{n}=-\nabla\psi_{n}\cdot \nabla \ell_{n}-\tilde{a}(\ell_{n})\Delta\psi_{n},\\[1mm]
\partial_{t}\psi_{n}-\Delta \ell_{n}=\frac{1}{2}|\nabla \ell_{n}|^2-\frac{1}{2}|\nabla \psi_{n}|^2+\tilde{a}(\ell_{n})\Delta\ell_{n}-\tilde{g}(\ell_{n}),\\ [1mm]
(\ell_{n},\psi_{n})\big|_{t=T_{n}}=(\mathrm{Re} e^{iT_{n}\Delta}\varphi, \mathrm{Im} e^{iT_{n}\Delta}\varphi),\\[1mm]
 \end{array} \right.
\end{equation}
where $T_{n}\geq1$ is an increasing sequence while functions $\varphi$ are distributions. Define $z_{n}=\ell_{n}+i\psi_{n}$. (\ref{Appro}) implies $z_{n}$ satisfies
\begin{equation}\label{Scho2}
\left\{
\begin{array}{l}\partial_{t}z_{n}-i\Delta z_{n}=\mathcal{N}^{2}( z_{n})+\mathcal{N}^{3}( z_{n}),\\[1mm]
z_{n}|_{t=T_{n}}=z_{1}\\[1mm]
 \end{array} \right.
\end{equation}
where $z_{1}=e^{iT_{n}\Delta}\varphi$ and we further have the following integral equation by Duhamel theory:
\begin{eqnarray}\label{Duhamel2}
z_{n}= z_{1}+\int^{t}_{T_{n}}e^{i(t-s)\Delta}\big(\mathcal{N}^{2}( z_{n})+\mathcal{N}^{3}( z_{n})\big)ds.
\end{eqnarray}
where $\mathcal{N}^{2}( z)$ are quadratic nonlinear terms, could be precisely rewritten as
\begin{eqnarray}
\nonumber\mathcal{N}^{2}( z_{n})&=&-\nabla\mathrm{Im}z_{n}\cdot\nabla\mathrm{Re}z_{n}+i\big(\frac{1}{2}|\nabla\mathrm{Re}z_{n}|^{2}-\frac{1}{2}|\nabla\mathrm{Im}z_{n}|^{2}-\mathrm{Re}z_{n}\mathrm{Re}z_{n}
+\mathrm{Re}z_{n}\Delta z_{n}\big)\\
&=&\frac{i}{4}\big(2(\nabla z_{n})^{2}-z_{n}^{2}-\bar{z_{n}}^{2}-2|z_{n}|^{2}+2z_{n}\Delta z_{n}+2\bar{z_{n}}\Delta z_{n}\big)
\end{eqnarray}
and $\mathcal{N}^{3}( z)$ contains those cubic terms. Also, we define the second approximation
$$z_{2}\triangleq\int^{t}_{T_{n}}e^{i(t-s)\Delta}\big(\mathcal{N}^{2}( z_{1})+\mathcal{N}^{3}( z_{1})\big)ds$$
and $z_{2}=z_{2,1}+z_{2,2}$ where
$$z_{2,2}=-\frac{i}{2}\int^{t}_{T_{n}}e^{i(t-s)\Delta}(|z_{1}|^2+|\nabla z_{1}|^2)ds$$
while $z_{2,1}$ contains the rest of nonlinearities.

We end up this section with introducing our main bootstrap argument. We shall consider the following norm
$$\mathcal{Z}(t)\triangleq\sup_{t\in[1,T_{n}]}t^{\frac{3}{2}}\| z- z_{1}- z_{2}\|_{H^{s+1}}.$$
Our main purpose in this section is to prove the following Lemma concerns priori estimates for $\mathcal{Z}(t)$.
\begin{lem}\label{final priori2}
Assume $\varphi$ satisfies $\varphi\in H^{3s+7}\cap\dot{H}^{-2}\cap B^{1}_{1,1}$, $x^{2}\varphi\in H^{1}$ and define
\begin{eqnarray}\label{varepsilon}
\|\varphi\|_{H^{3s+7}\cap\dot{H}^{-2}\cap B^{1}_{1,1}}+\|x^{2}\varphi\|_{H^{1}}=\varepsilon_{0}.
\end{eqnarray}
Suppose that $(\ell_{n},\psi_{n})$ is the solution of (\ref{Appro}). Assume that
$$\mathcal{Z}_{\varepsilon_{0}}=C(\varepsilon_{0}+\varepsilon^{2}_{0}+\varepsilon^{3}_{0}),$$
then there holds for $s$ sufficient large such that
\begin{eqnarray}\label{Z priori}
\mathcal{Z}(t)\lesssim (1+\mathcal{Z}(t)+\mathcal{Z}_{\varepsilon_{0}})(\mathcal{Z}(t)+\mathcal{Z}_{\varepsilon_{0}})^{2}.
\end{eqnarray}
\end{lem}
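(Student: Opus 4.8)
The plan is to run a closed bootstrap estimate for the quantity $\mathcal{Z}(t)=\sup_{[1,T_n]}t^{3/2}\|z-z_1-z_2\|_{H^{s+1}}$ by writing the equation satisfied by the error $w:=z-z_1-z_2$ and controlling it through the Duhamel formula together with the dispersive/Strichartz decay of $e^{it\Delta}$ and the energy method adapted to the quasilinear terms. First I would subtract the integral equations for $z$ (formula \eqref{Duhamel2}) and for $z_1+z_2$, obtaining
\[
w=\int_{T_n}^t e^{i(t-s)\Delta}\Big(\mathcal{N}^2(z)-\mathcal{N}^2(z_1)+\mathcal{N}^3(z)-\mathcal{N}^3(z_1)\Big)\,ds .
\]
Since $\mathcal{N}^2$ is quadratic and $\mathcal{N}^3$ cubic, each difference factors as (a linear combination of) $z_1$, $z_2$, $w$ times another such factor (with at most two derivatives, which is where the quasilinearity must be handled by the gauge/modified-density trick of \cite{BDD} rather than by plain Duhamel). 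Schematically, $\mathcal{N}^2(z)-\mathcal{N}^2(z_1)=\mathcal{N}^2(z_1+z_2+w)-\mathcal{N}^2(z_1)$ splits into terms at least linear in $z_2+w$, i.e. ``$z_1\cdot(z_2+w)$'', ``$(z_2+w)^2$'', plus the analogous cubic contributions; and one separates further the part of $z-z_1$ that is itself $z_2$ (already estimated in Section 3) from the genuine error $w$.

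Next I would estimate each contribution in $L^1_{[1,T_n]}L^\infty$-type norms feeding the energy inequality, using: (i) the decay $\|z_2\|_{\dot H^\sigma\cap\cdots}\lesssim t^{-3/2}$ established for the second approximation in Section 3 (which, by construction of $\mathcal{Z}_{\varepsilon_0}$, is bounded by $\mathcal{Z}_{\varepsilon_0}$ and involves the full set of norms in \eqref{varepsilon}, including $\dot H^{-2}$, $B^1_{1,1}$ and the weight $x^2\varphi$ needed for the space-time resonance analysis); (ii) the linear decay $\|e^{it\Delta}\varphi\|_{L^\infty}\lesssim t^{-3/2}\|\varphi\|_{B^1_{1,1}\cap\dots}$ for the $z_1$ factors; (iii) the definition of $\mathcal{Z}(t)$ for the $w$ factors, together with Sobolev embedding $H^{s+1}\hookrightarrow L^\infty$ (valid for $s>1/2$, hence for $s>5/2$) and the product/composition estimates quoted in Section 2. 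Carrying the derivatives: the terms ``$\mathrm{Re}z\,\Delta z$'' and ``$\tilde a(\ell)\Delta\ell$'' lose two derivatives, so here one does not estimate them directly in $H^{s+1}$ via Duhamel but absorbs the worst derivative into the modified energy (the gauge function $\tilde a$), so that only commutator terms—one order lower—remain to be controlled by the Duhamel/dispersive bound; these commutators then fit the same ``$z_1$ or $z_2$ or $w$, times derivatives'' template and are handled as above. Collecting, every term is a product of three factors, each of which is either $O(\varepsilon_0)$ (a $z_1$ factor, after using the smallness $\varepsilon_0\ll1$ to close), $O(t^{-3/2}\mathcal{Z}_{\varepsilon_0})$ (a $z_2$ factor) or $O(t^{-3/2}\mathcal{Z}(t))$ (a $w$ factor), and the integral $\int_1^{T_n}s^{-3/2}\,ds$ converges; multiplying back by $t^{3/2}$ yields precisely the trilinear right-hand side $(1+\mathcal{Z}(t)+\mathcal{Z}_{\varepsilon_0})(\mathcal{Z}(t)+\mathcal{Z}_{\varepsilon_0})^2$, the extra factor $(1+\cdots)$ coming from the composition estimates $\|F(\ell)\|\lesssim C(\|\ell\|_{L^\infty})\|\ell\|$ applied to $\tilde a,\tilde g$.

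The main obstacle is the loss of derivatives from the quasilinear terms $\tilde a(\ell)\Delta\ell$, $\tilde a(\ell)\Delta\psi$ and the non-transport term $\mathrm{Re}z\,\Delta z$: a naive Duhamel estimate in $H^{s+1}$ costs two derivatives that no dispersive smoothing recovers. This forces the energy method with the gauge function and modified density of \cite{BDD}, and one must verify that after the symmetrization the remaining commutators are genuinely one order lower and still carry a $z_1$ or $z_2$ factor (so they decay in time). A secondary difficulty is that the $t^{-3/2}$ decay of $z_2$ is not free: it requires the full space-time resonance analysis for the derivative-quadratic part $z_{2,1}$ and the Hayashi–Naumkin identity $\mathrm{Re}(i\int e^{i(t-s)\Delta}|e^{is\Delta}\varphi|^2ds)=\mathrm{Im}(\int (e^{i(t-s)\Delta}-1)|e^{is\Delta}\varphi|^2ds)$ for the $|z_1|^2$ part $z_{2,2}$—but this is exactly what Section 3 delivers, so in the present lemma it may be invoked as a black box, and the argument above reduces to bookkeeping of trilinear products against a convergent time integral.
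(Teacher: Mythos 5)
Your proposal matches the paper's proof: the paper likewise subtracts the Duhamel formulas to get an integral equation for $\tilde z=z-z_1-z_2$, estimates the $L^2$ norm term by term using the Section~3 decay of $z_2$ and the dispersive decay of $z_1$, and then handles the quasi-linear derivative-losing terms $\mathrm{Re}\,z\,\Delta z$, $\tilde a(\ell)\Delta\ell$, $\tilde a(\ell)\Delta\psi$ via the energy method with the gauge function $\phi$ (chosen by the ODE $\tfrac{a(\rho)}{\rho}+2\gamma a'(\rho)-2a(\rho)\tfrac{\phi'}{\phi}=0$) applied to $\Delta^\gamma\tilde v_n,\Delta^\gamma\tilde u_n$, leaving only lower-order commutators, exactly as in \cite{BDD}. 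One small slip: the gauge function is the auxiliary weight $\phi(\rho)$, not $\tilde a$ (which is the quasi-linear coefficient), and the closing estimate mixes bilinear and trilinear terms (the factor $(1+\mathcal{Z}+\mathcal{Z}_{\varepsilon_0})$ reflects the split between quadratic and cubic nonlinearities), but this does not affect the correctness of the strategy.
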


\begin{rem}
The decay rates of $\| z- z_{1}- z_{2}\|_{H^{s+1}}$ are not sharp. In fact, one may improve it to $t^{-2}$ by requiring additional regularity for $\varphi$ in $L^1$ space. In order to relax the assumption on $\varphi$, we just obtain decay with $t^{-\frac{3}{2}}$ which is enough to ensure $L^1_{T}L^\infty$ norm and further scattering property.
\end{rem}

\section{Estimates for the second approximation}\setcounter{equation}{0}
In this section, we present the estimates concern estimates for $z_{2}$, which is the cornerstone for proof of Lemma \ref{final priori2}. We first give basic dispersive estimates concern \Scho semi-group $e^{it\Delta}$
\begin{lem}\label{dispersive}
For any distribution $g$, if $p\in[2,\infty]$, there holds
\begin{eqnarray}
\|e^{it\Delta}g\|_{L^p}\lesssim t^{\frac{3}{p}-\frac{3}{2}}\|g\|_{L^{p'}}.
\end{eqnarray}
We also have the following estimates for Lorenz space:
\begin{eqnarray}
\|e^{it\Delta}g\|_{L^{p,2}}\lesssim t^{\frac{3}{p}-\frac{3}{2}}\|g\|_{L^{p',2}}.
\end{eqnarray}
\end{lem}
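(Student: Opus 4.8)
The plan is to reduce everything to the two endpoint exponents $p=2$ and $p=\infty$ and then interpolate. First I would recall the explicit kernel of the free \Scho\ propagator: since $\widehat{e^{it\Delta}g}(\xi)=e^{-it|\xi|^2}\hat g(\xi)$, a Gaussian integral gives, for $t>0$,
$$\big(e^{it\Delta}g\big)(x)=(4\pi it)^{-3/2}\int_{\R^3}e^{i|x-y|^2/(4t)}g(y)\,dy=(K_t*g)(x),$$
and the convolution kernel satisfies the pointwise bound $|K_t(x)|=(4\pi t)^{-3/2}\lesssim t^{-3/2}$, uniformly in $x$. Young's inequality then yields the dispersive endpoint $\|e^{it\Delta}g\|_{L^\infty}\lesssim t^{-3/2}\|g\|_{L^1}$. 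On the other side, Plancherel's theorem and $|e^{-it|\xi|^2}|=1$ give the energy endpoint $\|e^{it\Delta}g\|_{L^2}=\|g\|_{L^2}$, which is exactly the claimed inequality at $p=2$ because $t^{\frac32-\frac32}=1$.

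For $p\in(2,\infty)$ I would interpolate. Fixing $t$ and $g$ and viewing $e^{it\Delta}$ as a linear map, the Riesz--Thorin theorem applied to the pair of bounds $L^2\to L^2$ (norm $\le 1$) and $L^1\to L^\infty$ (norm $\lesssim t^{-3/2}$) with parameter $\theta=1-\tfrac2p\in(0,1)$ produces exponents $\tfrac1{p'}=\tfrac{1+\theta}2$ and $\tfrac1p=\tfrac{1-\theta}2$, hence $e^{it\Delta}:L^{p'}\to L^p$ with operator norm $\lesssim 1^{1-\theta}(t^{-3/2})^{\theta}=t^{-\frac32(1-\frac2p)}=t^{\frac3p-\frac32}$. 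The value $p=\infty$ is already the dispersive endpoint established above, so the first estimate follows for all $p\in[2,\infty]$.

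For the Lorentz refinement I would simply replace Riesz--Thorin by off-diagonal real (Marcinkiewicz/Hunt) interpolation. The two endpoint estimates are in particular weak-type bounds of types $(1,\infty)$ and $(2,2)$ with distinct source and target indices, and since here the interpolated source index $p'$ and target index $p$ satisfy $p'\le p$ (because $p\ge 2$), the Lorentz interpolation theorem gives, for every $r\in[1,\infty]$, a bounded map $e^{it\Delta}:L^{p',r}\to L^{p,r}$ with the same operator norm $\lesssim t^{\frac3p-\frac32}$ (up to an index-dependent constant), where $p$ is determined by $\theta$ exactly as before. Taking $r=2$ yields the second estimate; here $p$ should be taken in $[2,\infty)$, the case $p=\infty$ reducing again to the plain $L^1\to L^\infty$ dispersive bound since $L^{\infty,2}$ is understood as the $L^\infty$ endpoint.

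I do not expect a genuine obstacle: the content is entirely classical. The only points requiring a little care are the bookkeeping of interpolation exponents together with the time weight $t^{-3/2}$ (so that one indeed lands on the power $t^{\frac3p-\frac32}$), and, for the Lorentz statement, checking the hypotheses of the off-diagonal real interpolation theorem — namely $p'\le p$ and the distinctness of the endpoint indices — as well as the harmless interpretation of the $p=\infty$ case.
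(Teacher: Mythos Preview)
Your argument is correct and entirely standard: the paper itself states this lemma without proof, treating it as a classical dispersive estimate, so your explicit-kernel endpoint bound combined with Plancherel and Riesz--Thorin (respectively real interpolation for the Lorentz version) is exactly the expected justification.
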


The main goal of this section is to prove the following proposition:
\begin{prop}\label{w2}
If $\varphi$ satisfies $\varphi\in \dot{H}^{2}\cap\dot{H}^{-2}\cap B^{1}_{1,1}$, $x^{2}\varphi\in H^{1}$ and define
\begin{eqnarray}\label{varepsilon}
\|\varphi\|_{\dot{H}^{2}\cap\dot{H}^{-2}\cap B^{1}_{1,1}}+\|x^{2}\varphi\|_{H^{1}}=\mathcal{M},
\end{eqnarray}
then there holds for $t\geq1$ such that
\begin{eqnarray}\label{lplp}
\|\mathrm{Re} z_{2}\|_{L^2}\lesssim t^{-\frac{3}{2}}(\mathcal{M}^{2}+\mathcal{M}^{3}).
\end{eqnarray}
Moreover, if $\varphi$ additionally satisfies $\varphi\in \dot{H}^{2\alpha+1}$ with $\alpha\geq1$ and define
\begin{eqnarray}\label{varepsilon}
\|\varphi\|_{\dot{H}^{2\alpha+1}\cap\dot{H}^{\alpha-2}\cap B^{1}_{1,1}}+\|x^{2}\varphi\|_{H^{1}}=\tilde{\mathcal{M}},
\end{eqnarray}
then there holds for $t\geq1$ such that
\begin{eqnarray}\label{lplp1}
\| z_{2}\|_{\dot{H}^\alpha}\lesssim t^{-\frac{3}{2}}(\tilde{\mathcal{M}}^{2}+\tilde{\mathcal{M}}^{3}).
\end{eqnarray}
\end{prop}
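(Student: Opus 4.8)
\textbf{Proof proposal for Proposition \ref{w2}.}

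The plan is to split $z_2 = z_{2,1} + z_{2,2}$ as in the text and estimate the two pieces by different mechanisms. For the genuinely degenerate part $z_{2,2} = -\tfrac{i}{2}\int^t_{T_n} e^{i(t-s)\Delta}(|z_1|^2 + |\nabla z_1|^2)\,ds$, I would first take the real part and use the Hayashi--Naumkin identity quoted in the introduction,
$$\mathrm{Re}\Big(i\int^t_{T_n} e^{i(t-s)\Delta}|e^{is\Delta}\varphi|^2\,ds\Big) = \mathrm{Im}\Big(\int^t_{T_n}(e^{i(t-s)\Delta}-1)|e^{is\Delta}\varphi|^2\,ds\Big),$$
so that the factor $(e^{i(t-s)\Delta}-1)$ can be written, via the fundamental theorem of calculus, as $i(t-s)\int_0^1 \Delta e^{i\tau(t-s)\Delta}\,d\tau$, trading one power of $(t-s)$ for two derivatives falling on the quadratic term $|e^{is\Delta}\varphi|^2$. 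Those two derivatives are distributed by the product rule, and each factor $e^{is\Delta}\nabla^j\varphi$ is controlled in $L^\infty$ by the dispersive estimate of Lemma \ref{dispersive}, giving an $s^{-3/2}\|\varphi\|_{L^1\text{-type}}$ decay; one factor is kept in $L^2$. The $s$-integrability on $[T_n, t]$ together with the $(t-s)$ prefactor is what yields the clean $t^{-3/2}$ rate in $L^2$, and the $B^1_{1,1}$ and $\dot H^2$ hypotheses on $\varphi$ are exactly what make the low- and high-frequency endpoints of these $L^\infty$ dispersive bounds converge. The same computation with $\alpha$ extra derivatives on the outside (for \eqref{lplp1}) forces the extra regularity $\dot H^{2\alpha+1}$ and the shifted negative index $\dot H^{\alpha-2}$.

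For $z_{2,1}$, which collects the quadratic nonlinearities \emph{carrying at least one derivative} (the $(\nabla z_1)^2$, $z_1\Delta z_1$, $\bar z_1 \Delta z_1$ terms) plus all the cubic terms, I would run the space-time resonance analysis of Germain--Masmoudi--Shatah. Writing the Duhamel term on the Fourier side as $\int^t_{T_n}\!\!\int e^{is\hat\Omega_i(\xi,\eta)} m(\xi,\eta)\,\hat\varphi(\eta)\hat\varphi(\xi-\eta)\,d\eta\,ds$ with $\hat\Omega_i = |\xi|^2 \pm |\eta|^2 \pm |\xi-\eta|^2$ and $m$ a symbol of degree $\le 2$ in frequency, I would integrate by parts in $s$ away from the time-resonant set $\{\hat\Omega_i = 0\}$ and in $\eta$ away from the space-resonant set $\{\nabla_\eta \hat\Omega_i = 0\}$. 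The boundary and bulk terms produced carry symbols with $\hat\Omega_i$ or $\nabla_\eta\hat\Omega_i$ in the denominator; near the space-time resonance $\xi = 0$ these denominators vanish, but the derivative(s) built into $z_{2,1}$ supply compensating powers of $|\xi|$, so in $d=3$ the resulting multipliers are bounded (or bounded after an additional Littlewood--Paley localization and Bernstein's inequality), and one again closes with the dispersive bound of Lemma \ref{dispersive}, the weighted bound $x^2\varphi \in H^1$ controlling the $\eta$-derivatives that hit $\hat\varphi$ during the integration by parts. The cubic terms are easier: with $t^{-3/2}$ decay per $L^\infty$ factor a crude Hölder/dispersive estimate already gives integrable-in-time decay of rate $t^{-3}$ or better, contributing the $\mathcal{M}^3$ term.

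I expect the main obstacle to be the low-frequency analysis of $z_{2,1}$ near the space-time resonance $\xi=0$: after integrating by parts one must check that every term's multiplier, including those where $\nabla_\eta$ hits the cutoff or the oscillatory phase rather than $\hat\varphi$, is genuinely desingularized by the derivative structure, and that the weighted norm $\|x^2\varphi\|_{H^1}$ rather than a stronger weight suffices to absorb the $\eta$-derivatives of $\hat\varphi$ at those spots. A secondary technical point is bookkeeping the frequency budget: since the differentiated quadratic terms cost up to two derivatives and the $\alpha$-th order estimate \eqref{lplp1} adds $\alpha$ more, one must track that $\dot H^{2\alpha+1}$ (plus the Besov and weighted assumptions) is exactly the regularity consumed, which is then fed back into the choice $s > 5/2$ and the exponent $3s+7$ appearing in Lemma \ref{final priori2} and Theorem \ref{thm2.3}. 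Everything else --- the product estimates in Sobolev and Besov spaces, the Bernstein inequalities, and the Lorentz-space refinement of Lemma \ref{dispersive} used to handle the endpoint $p=\infty$ --- is routine given the tools recalled in Section 2.
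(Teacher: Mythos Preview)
Your overall strategy---Hayashi--Naumkin identity for $\mathrm{Re}\,z_{2,2}$, space--time resonances for $z_{2,1}$, crude H\"older for the cubics---is the paper's, but two steps in your execution have real gaps.

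First, your treatment of $\mathrm{Re}\,z_{2,2}$ does not close. After writing $e^{i(t-s)\Delta}-1=i(t-s)\int_0^1\Delta e^{i\tau(t-s)\Delta}\,d\tau$, you propose to bound $\|\Delta|z_1|^2(s)\|_{L^2}$ by the Leibniz rule plus one dispersive $L^\infty$ factor; this gives only $s^{-3/2}$, and then $\int_t^{T_n}(s-t)\,s^{-3/2}\,ds\sim T_n^{1/2}$ is unbounded in $T_n$, so no uniform $t^{-3/2}$ bound follows. The paper instead applies the vector-field identity
\[
\nabla(\bar f g)=\frac{1}{is}\big(\bar f\,e^{is\Delta}xe^{-is\Delta}g-g\,\overline{e^{is\Delta}xe^{-is\Delta}f}\big)
\]
\emph{twice}, trading each of the two derivatives in $\Delta|z_1|^2$ for a factor $s^{-1}$ and a weight $x$ on $\varphi$; this yields $\|\Delta|z_1|^2(s)\|_{L^2}\lesssim s^{-7/2}\mathcal M^2$ (the hypothesis $x^2\varphi\in H^1$ is exactly what absorbs the two weights), after which $\int_t^{T_n}(s-t)s^{-7/2}\,ds\lesssim t^{-3/2}$. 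You invoke weighted norms only for $z_{2,1}$; they are needed here too, in this sharper form.

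Second, your account of $z_{2,1}$ is inaccurate in a way that matters. It does \emph{not} consist solely of derivative-carrying quadratic terms: it also contains $z_1^2$ and $\bar z_1^2$, which carry no derivative and are handled not by a compensating $|\xi|$ but by normal forms together with the negative regularity $\varphi\in\dot H^{-2}$ (for $\bar z_1^2$ the phase $|\xi|^2+|\xi-\eta|^2+|\eta|^2$ never vanishes and its inverse costs $|\eta|^{-2}$; for $z_1^2$ the paper uses a Bony decomposition, doing space-normal-form on the paraproduct and time-normal-form on the remainder). Conversely, $\bar z_1\Delta z_1$ is \emph{not} in $z_{2,1}$ as such: the paper rewrites $\bar z_1\Delta z_1=\mathrm{div}(\bar z_1\nabla z_1)-|\nabla z_1|^2$ and moves the second piece to $z_{2,2}$. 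This rewriting is essential: the $\bar f g$ phase has $\nabla_\eta\hat\Omega=-2\xi$, so $\eta$-integration by parts produces a $|\xi|^{-1}$ singularity, and the interior $\Delta$ supplies $|\eta|^2$, not $|\xi|$---only the outer $\mathrm{div}$ provides the $|\xi|$ needed to cancel it.
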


\begin{proof}
We start with $L^2$ norm for $\mathrm{Re} z_{2}$. In fact $z_{2,1}$ could be formulated as
\begin{multline*}
z_{2,1}=\frac{i}{4}\int^{t}_{T_{n}}e^{i(t-s)\Delta}\big(-z_{1}z_{1}-\bar{z}_{1}\bar{z}_{1}+2\nabla z_{1}\cdot\nabla z_{1}+2z_{1}\Delta z_{1}
+2\mathrm{div}(\bar{z}_{1}\nabla z_{1})+\mathcal{N}^{3}( z_{1})\big)ds.
\end{multline*}
For the term $\bar{z}_{1}\bar{ z}_{1}$, we integral by parts for $s$ and have
\begin{eqnarray*}
&&\big\|\int^{t}_{T_{n}}e^{i(t-s)\Delta} \bar{z}_{1}\bar{ z}_{1}ds\big\|_{L^2}\\
&=&\big\|\int^{t}_{T_{n}}\int_{\mathbb{R}^3}e^{is\hat\Omega_{1}} \widehat{\bar{\varphi}_{1}}(\xi-\eta)\widehat{\bar{ \varphi}_{1}}(\eta)d\eta ds\big\|_{L^2}\\
&=&\big\|\int_{\mathbb{R}^3}\frac{1}{i\hat\Omega_{1}}
\big(\widehat{\bar{z}_{1}}(\xi-\eta,t)\widehat{\bar{z}_{1}}(\eta,t)-\widehat{\bar{z}_{1}}(\xi-\eta,T_{n})\widehat{\bar{z}_{1}}(\eta,T_{n})\big)d\eta \big\|_{L^2},
\end{eqnarray*}
therefore, by frequency decomposition, we have
\begin{multline}\label{443322}
\big\|\int^{t}_{T_{n}}e^{i(t-s)\Delta} \bar{z}_{1}\bar{ z}_{1}ds\big\|_{L^2}
\lesssim\|\bar{z}_{1}(t)\|_{L^\infty}\|\Lambda^{-2}\bar{ z}_{1}(t)\|_{L^2}+\|\bar{z}_{1}(T)\|_{L^\infty}\|\Lambda^{-2}\bar{ z}_{1}(T)\|_{L^2}\\
\lesssim (t^{-\frac{3}{2}}+T_{n}^{-\frac{3}{2}})\|\varphi\|_{L^1}\|\varphi\|_{\dot{H}^{-2}}\lesssim\! t^{-\frac{3}{2}}\mathcal{M}^{2}.
\end{multline}

As for term $z_{1}z_{1}$, by Bony decomposition, there holds
$$z_{1}z_{1}=2T_{z_{1}}z_{1}+R(z_{1},z_{1}).$$
Apparently for paraproduct, we could handle with
\begin{eqnarray*}
&&\big\|\int^{t}_{T_{n}}e^{i(t-s)\Delta} T_{z_{1}}{z_{1}}ds\big\|_{L^2}\\
&=&\big\|\sum_{j'\in \mathbb{Z}}\int^{t}_{T_{n}}\int_{\mathbb{R}^3}e^{is\hat\Omega_{3}}\widehat{\dot{S}_{j'-1}\varphi_{1}}(\xi-\eta)\widehat{\dot{\Delta}_{j'}\varphi_{1}}(\eta)d\eta ds\big\|_{L^2}\\
&\lesssim&\sum_{j'\in \mathbb{Z}}\big\|\frac{i}{2}\int^{t}_{T_{n}}\frac{1}{\tau}\int_{\mathbb{R}^3}\frac{\nabla_{\eta}\hat\Omega_{3}}{|\nabla_{\eta}\hat\Omega_{3}|^2}\nabla_{\eta}e^{is\Omega_{3}} \widehat{\dot{S}_{j'-1}\varphi_{1}}(\xi-\eta)\widehat{\dot{\Delta}_{j'}\varphi_{1}}(\eta)d\eta ds\big\|_{L^2}.
\end{eqnarray*}
Observe that $\nabla_{\eta}\hat\Omega_{3}=\xi-\eta-\eta\thicksim-2^{j'}$, we apply integral by parts and reach
\begin{multline*}
\big\|\int^{t}_{T_{n}}e^{i(t-s)\Delta} T_{z_{1}}{z_{1}}ds\big\|_{L^2}
\lesssim\|\Lambda^{-2}T_{z_{1}}{z_{1}}\|_{L^2}+\|\Lambda^{-1}T_{e^{it\Delta}x\varphi}{z_{1}}\|_{L^2}+\|\Lambda^{-1}T_{z_{1}}{e^{it\Delta}x\varphi}\|_{L^2}\\
\lesssim\|z_{1}\|_{L^\infty}\|z_{1}\|_{\dot{H}^{-2}}+\|e^{it\Delta}x\varphi\|_{L^4}\|z_{1}\|_{\dot{B}^{-1}_{4,2}}+\|z_{1}\|_{L^\infty}\|x\varphi\|_{\dot{H}^{-1}}.
\end{multline*}
Notice that
\begin{eqnarray}\label{x phi}
\|e^{i\tau\Delta}x\varphi\|_{L^4}\leq t^{-\frac{3}{4}}\|x\varphi\|_{L^\frac{4}{3}}\leq t^{-\frac{3}{4}}\|x^{2}\varphi\|^{\frac{1}{2}}_{L^2}\|\varphi\|^{\frac{1}{2}}_{L^1},
\end{eqnarray}
\begin{eqnarray}\label{x phi2}
\|z_{1}\|_{\dot{B}^{-1}_{4,2}}\leq t^{-\frac{3}{4}}\|\varphi\|_{\dot{B}^{-1}_{\frac{4}{3},2}}\leq t^{-\frac{3}{4}}\|\varphi\|^{\frac{1}{2}}_{\dot{H}^{-2}}\|\varphi\|^{\frac{1}{2}}_{\dot{B}^{0}_{1,1}},
\end{eqnarray}
while
\begin{multline}\label{x phi3}
\nonumber\|x\varphi\|_{\dot{H}^{-1}}\leq \big\{2^{-j}\|\ddj x\varphi\|_{L^2_{|x|\leq2^{-j}}}+2^{-j}\|\ddj x\varphi\|_{L^2_{|x|>2^{-j}}}\big\}_{l^2}\leq \|\varphi\|_{\dot{H}^{-2}}+\|x^{2}\varphi\|_{L^2},
\end{multline}
we immediately deduce
\begin{eqnarray}
\big\|\int^{t}_{T_{n}}e^{i(t-s)\Delta} T_{z_{1}}{z_{1}}ds\big\|_{L^2}\lesssim t^{-\frac{3}{2}}\mathcal{M}^{2}.
\end{eqnarray}

For the remainder term $R(z_{1},z_{1})$, we find actually $\hat\Omega_{3}$ is away from time resonant set, thus we just repeat calculations as in (\ref{443322}) and we eventually reach
\begin{eqnarray}
\big\|\int^{t}_{T_{n}}e^{i(t-s)\Delta} z_{1}z_{1}ds\big\|_{L^2}\lesssim\! t^{-\frac{3}{2}}\mathcal{M}^{2}.
\end{eqnarray}
Similarly we are able to bound terms $\nabla z_{1}\cdot\nabla z_{1}, z_{1}\Delta z_{1}$ provided additionally $\varphi\in \dot{H}^2\cap \dot{B}^{1}_{1,1}$ and $x^2\varphi\in \dot{H}^1$. As for $\mathrm{div}(\bar{z}_{1}\nabla z_{1})$, we shall utilize the derivative. Actually, the following equality always holds true:
\begin{eqnarray}\label{||}
\nabla(\bar{ f}g)=\frac{1}{it}(\bar{ f}e^{it\Delta}xe^{-it\Delta}g- g\overline{e^{it\Delta}xe^{-it\Delta}f}),
\end{eqnarray}
hence we deduce for $\mathrm{div}(\bar{z}_{1}\nabla z_{1})$
\begin{multline}
\big\|\int^{t}_{T_{n}}e^{i(t-s)\Delta} \mathrm{div}(\bar{z}_{1}\nabla z_{1})ds\big\|_{L^2}
\lesssim\int^{t}_{T_{n}}\frac{1}{s}\|\bar{ z}_{1}e^{it\Delta}x\nabla\varphi+\nabla z_{1}\overline{e^{it\Delta}x\varphi}\|_{L^2}ds\\
\lesssim\int^{t}_{T_{n}}\frac{1}{s}\big(\|z_{1}\|_{L^4}\|e^{it\Delta}x\varphi\|_{\dot{W}^{1,4}}+\| z_{1}\|_{\dot{W}^{1,\infty}}\|x\varphi\|_{L^2}\big)ds
\lesssim t^{-\frac{3}{2}}\mathcal{M}^{2}.
\end{multline}
In the end, for cubic terms $\mathcal{N}^{3}( z_{1})$, we have
\begin{eqnarray}
\big\|\int^{t}_{T_{n}}e^{i(t-s)\Delta} \mathcal{N}^{3}( z_{1})ds\big\|_{L^2}
\lesssim\int^{t}_{T_{n}}\|z_{1}\|^{2}_{L^\infty}\|\Delta z_{1}\|_{L^2}ds\
\lesssim t^{-2}\mathcal{M}^{3}
\end{eqnarray}
and we arrive at for $t\geq1$
\begin{eqnarray}\label{z1}
\|z_{2,1}\|_{L^2}\lesssim t^{-\frac{3}{2}}(\mathcal{M}^{2}+\mathcal{M}^{3}).
\end{eqnarray}

Now we turn to $\mathrm{Re}z_{2,2}$. Let us notice the following equality
\begin{eqnarray*}
\mathrm{Re}z_{2,2}&=&-\frac{1}{2}\mathrm{Im}\big(\int^{t}_{T_{n}}e^{i(t-s)\Delta}(|z_{1}|^2+|\nabla z_{1}|^2)ds\big)\\
&=&-\frac{1}{2}\mathrm{Im}\big(\int^{t}_{T_{n}}(e^{i(t-s)\Delta}-1)(|z_{1}|^2+|\nabla z_{1}|^2)ds\big).
\end{eqnarray*}
Therefore we have
\begin{eqnarray*}
\|\mathrm{Re}z_{2,2}\|_{L^2}
&\lesssim&\big\|\int^{t}_{T_{n}}(e^{i(s-\tau)\Delta}-1)(|z_{1}|^2+|\nabla z_{1}|^2)d\tau\big\|_{L^2}\\
&\lesssim&\int^{t}_{T_{n}}(s-\tau)\|\Delta(|z_{1}|^2+|\nabla z_{1}|^2)\|_{L^2}d\tau.
\end{eqnarray*}
Again by utilizing (\ref{||}), we have that
\begin{multline*}
\|\Delta|z_{1}|^2(\tau)\|_{L^2}
\lesssim\frac{1}{\tau^{2}}\|\big(z_{1}\overline{e^{i\tau\Delta}x^{2}\varphi}+|e^{i\tau\Delta}x\varphi|^2+\overline{z_{1}}e^{i\tau\Delta}x^{2}\varphi\big)(\tau)\|_{L^2}\\
\lesssim\frac{1}{\tau^{2}}\big(\|z_{1}\|_{L^\infty}\|x^{2}\varphi\|_{L^2}+\|e^{i\tau\Delta}x\varphi\|^{2}_{L^4}\big).
\end{multline*}
Keep in mind (\ref{x phi}), we arrive at
$$\|\Delta|z_{1}|^2(\tau)\|_{L^2}\lesssim\tau^{-\frac{7}{2}}\mathcal{M}^{2}.$$
Similar calculations on $|\nabla z_{1}|^2$ implies
\begin{eqnarray*}
\|\mathrm{Re}z_{2,2}\|_{L^2}&\lesssim& t^{-\frac{3}{2}}\mathcal{M}^{2}
\end{eqnarray*}
and we obtain the first inequality in (\ref{lplp}).

Secondly, we focus on estimating $z_{2}$ in regularity space $\dot{H}^\alpha$ for $\alpha\geq1$. We would repeat the computations for (\ref{z1}) since we have regularity to overcome those degeneracies, so it is enough to bound with those quasi-linear ones $z\Delta z$ and $\mathrm{div}(\bar{z}_{1}\nabla z_{1})$. For the first one, by Bony decomposition, we have
\begin{multline*}
\big\|\int^{t}_{T_{n}}e^{i(t-s)\Delta} T_{z_{1}}{\Delta z_{1}}ds\big\|_{\dot{H}^\alpha}
\lesssim\|T_{z_{1}}{z_{1}}\|_{\dot{H}^\alpha}+\|T_{e^{it\Delta}x\varphi}{\Lambda z_{1}}\|_{\dot{H}^\alpha}+\|T_{z_{1}}{\Lambda e^{it\Delta}x\varphi}\|_{\dot{H}^\alpha}\\
\lesssim\|z_{1}\|_{L^\infty}\|z_{1}\|_{\dot{H}^{\alpha}}+\|e^{it\Delta}x\varphi\|_{L^4}\|z_{1}\|_{\dot{W}^{\alpha+1,4}}+\|z_{1}\|_{L^\infty}\|x\varphi\|_{\dot{H}^{\alpha+1}}.
\end{multline*}
Since the following inequality holds true by Lemma \ref{dispersive}:
$$\|x\varphi\|_{\dot{H}^{\alpha+1}}\leq\|\varphi\|_{\dot{H}^{2\alpha+1}}+\|x^{2}\varphi\|_{\dot{H}^{1}};$$
$$\|e^{it\Delta}x\varphi\|_{L^4}\leq t^{-\frac{3}{4}}\|x\varphi\|_{L^{\frac{4}{3},2}}\leq t^{-\frac{3}{4}}\|\langle x^{2}\rangle\varphi\|_{L^2};$$
$$\|z_{1}\|_{\dot{W}^{\alpha+1,4}}\leq t^{-\frac{3}{4}}\|\varphi\|_{\dot{W}^{\alpha+1,\frac{4}{3}}}\leq t^{-\frac{3}{4}}(\|\varphi\|_{\dot{W}^{1,1}}+\|\varphi\|_{\dot{H}^{2\alpha+1}}),$$
we have
\begin{eqnarray*}
\big\|\int^{t}_{T_{n}}e^{i(t-s)\Delta} T_{z_{1}}{\Delta z_{1}}ds\big\|_{\dot{H}^\alpha}\lesssim t^{-\frac{3}{2}}\tilde{\mathcal{M}}^{2}.
\end{eqnarray*}
On the other hand, there holds
\begin{multline}
\big\|\int^{t}_{T_{n}}e^{i(t-s)\Delta} R(z_{1},\Delta z_{1})ds\big\|_{\dot{H}^\alpha}\lesssim\|z_{1}\|_{L^\infty}\|z_{1}\|_{\dot{H}^\alpha}
\lesssim t^{-\frac{3}{2}}\|\varphi\|_{L^{1}}\|\varphi\|_{\dot{H}^{\alpha}}\lesssim\! t^{-\frac{3}{2}}\tilde{\mathcal{M}}^{2}.
\end{multline}

Next we shall bound $\mathrm{div}(\bar{z}_{1}\nabla z_{1})$. There holds
\begin{multline}
\big\|\int^{t}_{T_{n}}e^{i(t-s)\Delta} \mathrm{div}(\bar{z}_{1}\nabla z_{1})ds\big\|_{\dot{H}^{\alpha}}
\lesssim\int^{t}_{T_{n}}\frac{1}{s}\|\bar{ z}_{1}e^{it\Delta}x\nabla\varphi+\nabla z_{1}\overline{e^{it\Delta}x\varphi}\|_{\dot{H}^{\alpha}}ds\\
\lesssim\int^{t}_{T_{n}}\frac{1}{s}\big(\| z_{1}\|_{W^{1,\infty}}\|x\varphi\|_{\dot{H}^{\alpha+1}}+\|e^{it\Delta}x\varphi\|_{W^{1,4}}\|z_{1}\|_{\dot{W}^{\alpha+1,4}}\big)ds
\lesssim t^{-\frac{3}{2}}\tilde{\mathcal{M}}^{2}.
\end{multline}
The other derivative term follows similar steps and we get to (\ref{lplp}).
\end{proof}

\section{Proof of Lemma \ref{final priori2}}\setcounter{equation}{0}
In this section, we are going to prove the main bootstrap argument, which based on energy estimate. For convenience of writing, we would hiding $n$ in the most calculation.

\subsection{$L^2$ estimates}

We begin with controlling $L^2$ norm. Define $\tilde{ z}= z- z_{1}- z_{2}$, then $\tilde{ z}$ satisfies
\begin{eqnarray}\label{tilde}
\tilde{ z}\nonumber&=&\int^{t}_{T_{n}}e^{i(t-s)\Delta}\big(\mathcal{N}^{2}( z)+\mathcal{N}^{3}( z)-\mathcal{N}^{2}( z_{1})-\mathcal{N}^{3}( z_{1})\big)ds.
\end{eqnarray}
For those nonlinearities, we shall take $\mathrm{Re}z\mathrm{Re}z$ as an example, there holds
\begin{multline}
i\int^{t}_{T_{n}}e^{i(t-s)\Delta}\big(\mathrm{Re}z\mathrm{Re}z-\mathrm{Re}z_{1}\mathrm{Re}z_{1}\big)ds=i\int^{t}_{T_{n}}e^{i(t-s)\Delta}\big(\mathrm{Re}\tilde{z}\mathrm{Re}\tilde{z}\\
+2\mathrm{Re}(z_{1}+z_{2})\mathrm{Re}\tilde{z}+2\mathrm{Re}z_{1}\mathrm{Re}z_{2}+\mathrm{Re}z_{2}\mathrm{Re}z_{2}\big)ds.
\end{multline}
Then taking $L^2$ norm yields for the former two terms:
\begin{multline}\label{iuiu}
\|\int^{t}_{T_{n}}e^{i(t-s)\Delta}\big(\mathrm{Re}\tilde{z}\mathrm{Re}\tilde{z}+2\mathrm{Re}(z_{1}+z_{2})\mathrm{Re}\tilde{z}\big)ds\|_{L^2}\\
\lesssim\int^{t}_{T_{n}}\|\mathrm{Re}\tilde{z}\|_{L^2}\big(\|\mathrm{Re}\tilde{z}\|_{\dot{H}^\frac{3}{2}}+\|\mathrm{Re}z_{1}+\mathrm{Re}z_{2}\|_{L^\infty}\big)ds\lesssim t^{-2}\mathcal{Z}^{2}_{\varepsilon_{0}}.
\end{multline}
On the other hand, for the latter ones, by Lemma \ref{w2}, we deduce
\begin{multline}
\|\int^{t}_{T_{n}}e^{i(t-s)\Delta}\big(2\mathrm{Re}z_{1}\mathrm{Re}z_{2}+\mathrm{Re}z_{2}\mathrm{Re}z_{2}\big)ds\|_{L^2}\\
\lesssim\int^{t}_{T_{n}}\|\mathrm{Re}z_{2}\|_{L^2}\|\mathrm{Re}z_{1}+\mathrm{Re}z_{2}\|_{L^\infty}ds\lesssim t^{-2}\mathcal{Z}^{2}_{\varepsilon_{0}}.
\end{multline}
Hence we get
\begin{eqnarray}\label{popokki}
\|i\int^{t}_{T_{n}}e^{i(t-s)\Delta}\big(\mathrm{Re}z\mathrm{Re}z-\mathrm{Re}z_{1}\mathrm{Re}z_{1}\big)ds\|_{L^2}\lesssim t^{-2}\mathcal{Z}^{2}_{\varepsilon_{0}}.
\end{eqnarray}
For those derivative nonlinearities, we only  treat $\mathrm{Re}z\Delta z$ where
\begin{multline}
i\int^{t}_{T_{n}}e^{i(t-s)\Delta}\big(\mathrm{Re}z\Delta z-\mathrm{Re}z_{1}\Delta z_{1}\big)ds=i\int^{t}_{T_{n}}e^{i(t-s)\Delta}\big(\mathrm{Re}\tilde{z}\Delta \tilde{z}
+\mathrm{Re}\tilde{z}\Delta (z_{1}+z_{2})\\
+\mathrm{Re}(z_{1}+z_{2})\Delta\tilde{z}+\mathrm{Re}z_{1}\Delta z_{2}+\mathrm{Re}z_{2}\Delta z_{1}+\mathrm{Re}z_{2}\Delta z_{2}\big)ds.
\end{multline}
We bound nonlinearities by similarly as (\ref{iuiu})-(\ref{popokki}) except $\mathrm{Re}\tilde{z}\Delta z_{1}$ and $\mathrm{Re}z_{2}\Delta z_{1}$ where
\begin{multline*}
\|\int^{t}_{T_{n}}e^{i(t-s)\Delta}\mathrm{Re}z_{2}\Delta z_{1}ds\|_{L^2}\lesssim\int^{t}_{T_{n}}\|\mathrm{Re}z_{2}\|_{L^3}\|\Delta z_{1}\|_{L^6}ds\\
\lesssim\int^{t}_{T_{n}}s^{-1}\|\mathrm{Re}z_{2}\|^{\frac{1}{2}}_{L^2}\|\mathrm{Re}z_{2}\|^{\frac{1}{2}}_{\dot{H}^1}\|\varphi\|^{\frac{2}{3}}_{\dot{W}^{1,1}}\|\varphi\|^{\frac{1}{3}}_{\dot{H}^{4}}ds
\lesssim t^{-\frac{3}{2}}(\mathcal{Z}(t)+\mathcal{Z}_{\varepsilon_{0}})^{2}.
\end{multline*}
Hence we reach for $s>\frac{5}{2}$
\begin{eqnarray*}
\|\int^{t}_{T_{n}}e^{i(t-s)\Delta}\big(\mathcal{N}^{2}( z)-\mathcal{N}^{2}( z_{1})\big)ds\|_{L^2}\lesssim t^{-\frac{3}{2}}(\mathcal{Z}(t)+\mathcal{Z}_{\varepsilon_{0}})^{3}.
\end{eqnarray*}
Similar calculations also hold for cubic terms where we notice
\begin{eqnarray}\label{cubic composite}
\|z\|_{L^{\infty}}\lesssim \|\tilde{ z}\|_{\dot{H}^{\frac{3}{2}}}+\|z_{1}\|_{\dot{W}^{\frac{1}{2},6}}+\|z_{2}\|_{\dot{H}^{\frac{3}{2}}}
\lesssim \mathcal{Z}(t)+t^{-\frac{3}{2}}\mathcal{Z}_{\varepsilon_{0}}
\end{eqnarray}
and we have for $t\geq1$ such that
\begin{eqnarray*}
\|\int^{t}_{T_{n}}e^{i(t-s)\Delta}\big(\mathcal{N}^{3}( z)-\mathcal{N}^{3}( z_{1})\big)ds\|_{L^2}\lesssim t^{-\frac{3}{2}}(\mathcal{Z}(t)+\mathcal{Z}_{\varepsilon_{0}})^{3}.
\end{eqnarray*}

Consequently, we conclude with
\begin{eqnarray}\label{low order}
\|\tilde{ z}\|_{L^{2}}\lesssim t^{-\frac{3}{2}}(1+\mathcal{Z}(t)+\mathcal{Z}_{\varepsilon_{0}})(\mathcal{Z}(t)+\mathcal{Z}_{\varepsilon_{0}})^{2}.
\end{eqnarray}

\subsection{High order Sobolev estimates}

Then we consider the high order derivative estimates. In fact, if we define $v_{n}=\nabla\ell_{n}$ and further set $\tilde{v}_{n}=v_{n}-\nabla z_{R,1}-\nabla z_{R,2}$ and $\tilde{u}_{n}=u_{n}-\nabla z_{I,1}-\nabla z_{I,2}$ with $z_{R,i}=\mathrm{Re} z_{i}, z_{I,i}=\mathrm{Im} z_{i}$, then (\ref{Appro}) becomes
\begin{equation}\label{Energy error}
\left\{\begin{array}{l}
\partial_{t}\tilde{v}_{n}+u_{n}\cdot \nabla \tilde{v}_{n}+v_{n}\cdot \nabla \tilde{u}_{n}+\nabla(a(\rho)\mathrm{div} \tilde{u}_{n})=F_{1}(\tilde{v}_{n},\tilde{u}_{n},\ell_{n}),\\ [1mm]
\partial_{t}\tilde{u}_{n}+u_{n}\cdot \nabla \tilde{u}_{n}-v_{n}\cdot \nabla \tilde{v}_{n}-\nabla(a(\rho)\mathrm{div} \tilde{v}_{n})= F_{2}(\tilde{v}_{n},\tilde{u}_{n},\ell_{n}),\\[1mm]
\end{array} \right.
\end{equation}
where
\begin{eqnarray*}
F_{1}(\tilde{v}_{n}, \tilde{u}_{n},\ell_{n})=\tilde{u}_{n}\cdot \nabla^{2} (z_{R,1}+z_{R,2})+\nabla z_{I,1}\cdot \nabla^{2} z_{R,2}+\nabla z_{I,2}\cdot \nabla^{2} z_{R,1}\\
+\nabla z_{I,2}\cdot \nabla^{2} z_{R,2}+\tilde{v}_{n}\cdot \nabla^{2} (z_{I,1}+z_{I,2})+\nabla z_{R,1}\cdot \nabla^{2} z_{I,2}+\nabla z_{R,2}\cdot \nabla^{2} z_{I,1}\\
+\nabla z_{R,2}\cdot \nabla^{2} z_{I,2}+\nabla(\tilde{a}(\ell_{n})\Delta (z_{I,1}+z_{I,2}))-\nabla(\tilde{a}(z_{R,1})\Delta z_{I,1})
\end{eqnarray*}
and
\begin{eqnarray*}
F_{2}(\tilde{v}_{n}, \tilde{u}_{n},\ell_{n})=\tilde{u}_{n}\cdot \nabla^{2} (z_{I,1}+z_{I,2})+\nabla z_{I,1}\cdot \nabla^{2} z_{I,2}+\nabla z_{I,2}\cdot \nabla^{2} z_{I,1}\\
+\nabla z_{I,2}\cdot \nabla^{2} z_{I,2}-\tilde{v}_{n}\cdot \nabla^{2} (z_{R,1}+z_{R,2})-\nabla z_{R,1}\cdot \nabla^{2} z_{R,2}-\nabla z_{R,2}\cdot \nabla^{2} z_{R,1}\\
\qquad-\nabla z_{R,2}\cdot \nabla^{2} z_{R,2}-\nabla(\tilde{a}(\ell_{n})\Delta (z_{R,1}+z_{R,2}))+\nabla(\tilde{a}(z_{R,1})\Delta z_{R,1})\\
-\bar{g}(\ell_{n})\ell_{n}+\bar{g}(z_{R,1})z_{R,1}
\end{eqnarray*}
while $\bar{g}(\ell_{n})$ is the composite function vanishing at 0. Again, for convenience, we drop $n$ in the following estimates. Now by imposing derivatives $\Delta^{\gamma}$ and gauge function $\phi(\rho)$ under the $L^2$ inner product, we write above equation into
\begin{multline}
\partial_{t}\|\phi\Delta^{\gamma} \v\|^{2}_{L^2}+\int\limits\phi\Delta^{\gamma} (u\cdot \nabla \tilde{v}+v\cdot \nabla \tilde{u}+\nabla(a(\rho)\mathrm{div} \tilde{u}))\cdot\phi\Delta^{\gamma} \v dx\\
=\int\limits\phi\Delta^{\gamma} F_{1}\cdot\phi\Delta^{\gamma} \v dx,
\end{multline}
\begin{multline}
\partial_{t}\|\phi\Delta^{\gamma} \u\|^{2}_{L^2}+\int\limits\phi\Delta^{\gamma}(u\cdot \nabla \tilde{u}-v\cdot \nabla \tilde{v}-\nabla(a(\rho)\mathrm{div} \tilde{v}))\cdot\phi\Delta^{\gamma} \u dx\\
=\int\limits\phi\Delta^{\gamma} F_{2}\cdot\phi\Delta^{\gamma} \u dx.
\end{multline}

Notice that terms in $F_{i}$ don't contain any derivative for $\tilde{v}, \tilde{u}$, one could directly apply the estimates. We take the term $\tilde{a}(\ell)\Delta \ell$ for an example. We have
$$\tilde{a}(\ell_{n})=(1+\bar{a}(\ell_{n}))\ell_{n},\,\, \tilde{a}(z_{R,1})=(1+\bar{a}(z_{R,1}))z_{R,1}$$
and further
\begin{eqnarray*}
\ell_{n}\Delta (z_{I,1}+z_{I,2})-z_{R,1}\Delta\nabla z_{I,1}&=&\mathrm{Re}\tilde{z}\Delta\nabla (\mathrm{Im}z_{1}+\mathrm{Im}z_{2})+\mathrm{Re}z_{1}\Delta\nabla \mathrm{Im}z_{2}\\
&+&\mathrm{Re}z_{2}\Delta\nabla \mathrm{Im}z_{1}+\mathrm{Re}z_{2}\Delta\nabla \mathrm{Im}z_{2}.
\end{eqnarray*}
Obviously if we replace $2\gamma$ by $s$, for the first term there holds
\begin{eqnarray}
\|\mathrm{Re}\tilde{z}\Delta\nabla (\mathrm{Im}z_{1}+\mathrm{Im}z_{2})\|_{\dot{H}^{s}}\lesssim \|\tilde{z}\|_{W^{s,3}}\|\Delta\nabla z_{1}\|_{W^{s,6}}
+\|\tilde{z}\|_{W^{s,6}}\|\Delta\nabla z_{2}\|_{W^{s,3}}.
\end{eqnarray}
Clearly we have
$$\|\Delta\nabla z_{1}\|_{W^{s,6}}\leq s^{-1}\|\varphi\|_{W^{s+3,\frac{6}{5}}}
\leq s^{-1}\|\varphi\|^{\frac{2}{3}}_{W^{1,1}}\|\varphi\|^{\frac{1}{3}}_{H^{3s+7}}\leq s^{-1}\mathcal{Z}_{\varepsilon_{0}}$$
when inspired by taking $\alpha=s+\frac{7}{2}$ in Lemma \ref{w2} yields
$$\|\Delta\nabla z_{2}\|_{W^{s,3}}\leq\|z_{2}\|_{\dot{H}^{\frac{7}{2}}\cap\dot{H}^{s+\frac{7}{2}}}\leq t^{-\frac{3}{2}}\mathcal{Z}_{\varepsilon_{0}}.$$
Consequently we have
\begin{eqnarray}
\|\mathrm{Re}\tilde{z}\Delta\nabla (\mathrm{Im}z_{1}+\mathrm{Im}z_{2})(s)\|_{H^{s}}\lesssim s^{-\frac{5}{2}}\mathcal{Z}(s)\mathcal{Z}_{\varepsilon_{0}}.
\end{eqnarray}
$\mathrm{Re}z_{1}\Delta\nabla \mathrm{Im}z_{2}, \mathrm{Re}z_{2}\Delta\nabla \mathrm{Im}z_{1}$ and $\mathrm{Re}z_{2}\Delta\nabla \mathrm{Im}z_{2}$ shall be treated similarly and we arrive at
\begin{eqnarray}
\|\ell_{n}\Delta u_{n}-z_{R,1}\Delta\nabla z_{I,1}\|_{H^{s}}\lesssim s^{-\frac{5}{2}}(\mathcal{Z}(s)+\mathcal{Z}_{\varepsilon_{0}})^{2}.
\end{eqnarray}
Other terms enjoy similar calculations and observe that by Lemma \ref{w2}, there holds for $t\geq1$
 \begin{eqnarray}\label{11}
&&\int\limits\phi\Delta^{\gamma} F_{1}\cdot\phi\Delta^{\gamma} \v dx+\int\limits\phi\Delta^{\gamma} F_{2}\cdot\phi\Delta^{\gamma} \u dx\\
\nonumber&\lesssim& t^{-\frac{5}{2}}(1+\mathcal{Z}(t)+\mathcal{Z}_{\varepsilon_{0}})(\mathcal{Z}(t)+\mathcal{Z}_{\varepsilon_{0}})\|\phi\Delta^{\gamma} (\v,\u)\|_{L^2}.
\end{eqnarray}

For the remaining nonlinearities which involve derivative loss, we start with transport structure where by commutator estimates and integral by parts, there holds
\begin{multline}
\int\limits\phi\Delta^{\gamma} (u\cdot \nabla \tilde{v})\cdot\phi\Delta^{\gamma} \v dx=\int\limits[\phi\Delta^{\gamma}, u\cdot \nabla] \v\cdot\phi\Delta^{\gamma} \v dx
+\int\limits u\cdot \nabla(\phi\Delta^{\gamma} \v)\cdot\phi\Delta^{\gamma} \v dx\\
\lesssim\big(1+\|\ell\|_{L^\infty}\big)\|\nabla u\|_{L^\infty}\|(\v,\u)\|^{2}_{H^{2\gamma}}\lesssim t^{-\frac{5}{2}}(1+\mathcal{Z}(t)+\mathcal{Z}_{\varepsilon_{0}})(\mathcal{Z}(t)+\mathcal{Z}_{\varepsilon_{0}})\|\phi\Delta^{\gamma}\v\|_{L^2}.
\end{multline}
Similarly, we have
\begin{eqnarray}\label{1111}
\int\limits\phi\Delta^{\gamma} (u\cdot \nabla \u)\cdot\phi\Delta^{\gamma} \u dx\lesssim
t^{-\frac{5}{2}}(1+\mathcal{Z}(t)+\mathcal{Z}_{\varepsilon_{0}})(\mathcal{Z}(t)+\mathcal{Z}_{\varepsilon_{0}})\|\phi\Delta^{\gamma}\u\|_{L^2}.
\end{eqnarray}
For the quasi-linear ones and non-transport term, we first have
$$\phi\Delta^{\gamma} \nabla(a(\rho)\mathrm{div} \u)=\nabla\big( a(\rho)\phi\Delta^{\gamma}\mathrm{div}\u\big)+2\gamma\nabla a(\rho)\phi\Delta^{\gamma}\mathrm{div}\u-
\nabla\phi a(\rho)\Delta^{\gamma}\mathrm{div}\u+C_{1};$$
$$\phi\Delta^{\gamma} \nabla(a(\rho)\mathrm{div}  \v)=\nabla\big( a(\rho)\phi\Delta^{\gamma}\mathrm{div}\v\big)+2\gamma\nabla a(\rho)\phi\Delta^{\gamma}\mathrm{div}\v-
\nabla\phi a(\rho)\Delta^{\gamma}\mathrm{div}\v+C_{2}$$
where $C_{1}, C_{2}$ only contain terms with at most $2\gamma$ order derivatives for $\u,\v$. Hence integrating by parts leads to
\begin{multline*}
\int\limits\phi\Delta^{\gamma}\nabla(a(\rho)\mathrm{div}\u)\cdot\phi\Delta^{\gamma}\v dx=-\int\limits a(\rho)\phi\Delta^{\gamma}\mathrm{div}\u\phi\Delta^{\gamma}\mathrm{div}\v dx\\
+2\gamma\int\limits\nabla a(\rho)\phi \Delta^{\gamma}\mathrm{div}\u\cdot\phi\Delta^{\gamma}\v dx-2\nabla\phi a(\rho)\Delta^{\gamma}\mathrm{div}\u\phi\Delta^{\gamma}\v dx+\tilde{C}_{1}.
\end{multline*}
Now for the fixed $\gamma$, selecting $\phi$ satisfies
\begin{eqnarray}\label{gauge}
\frac{a(\rho)}{\rho}+2\gamma a'(\rho)-2a(\rho)\frac{\phi'}{\phi}=0,
\end{eqnarray}
then the fact $v=\frac{a(\rho)}{\rho}\nabla\rho$ leads to
$$\int\limits\big(\phi v
+2\gamma\nabla a(\rho)\phi -2\nabla\phi a(\rho)\big)\Delta^{\gamma}\mathrm{div} \u\phi\Delta^{\gamma} \v dx=0$$
we immediately have
\begin{eqnarray*}
&&\int\limits\phi\Delta^{\gamma}\nabla(a(\rho)\mathrm{div} \u)\cdot\phi\Delta^{\gamma}\v dx+\int\limits\phi\Delta^{\gamma}( z\cdot\nabla \u)\cdot\phi\Delta^{\gamma}\v dx\\
&=&-\int\limits a(\rho)\phi\Delta^{\gamma}\mathrm{div} \u\phi\Delta^{\gamma}\mathrm{div} \v dx+\tilde{C}_{1}.
\end{eqnarray*}
Also, by symmetrical analysis, we could reach
\begin{eqnarray*}
&&-\int\limits\phi\Delta^{\gamma}\nabla(a(\rho)\mathrm{div} \v)\cdot\phi\Delta^{\gamma} \u dx-\int\limits\phi\Delta^{\gamma}(v\cdot\nabla \v)\cdot\phi\Delta^{\gamma} \u dx\\
&=&\int\limits a(\rho)\phi\Delta^{\gamma}\mathrm{div} \v\phi\Delta^{\gamma}\mathrm{div} \u dx-\tilde{C}_{2}.
\end{eqnarray*}
Therefore, above computations imply
\begin{multline}\label{111}
\int\limits\phi\Delta^{\gamma} (v\cdot \nabla \tilde{u}+\nabla(a(\rho)\mathrm{div} \tilde{u}))\cdot\phi\Delta^{\gamma} \v dx+\int\limits\phi\Delta^{\gamma}(-v\cdot \nabla \tilde{v}-\nabla(a(\rho)\mathrm{div} \tilde{v}))\cdot\phi\Delta^{\gamma} \u dx\\
=\tilde{C}_{1}+\tilde{C}_{2}\lesssim t^{-\frac{5}{2}}(1+\mathcal{Z}(t)+\mathcal{Z}_{\varepsilon_{0}})(\mathcal{Z}(t)+\mathcal{Z}_{\varepsilon_{0}})\|\phi\Delta^{\gamma} (\u,\v)\|_{L^2}.
\end{multline}
Finally, plugging (\ref{11})-(\ref{1111}) with (\ref{111}), we are able to conclude
$$\partial_{t}\|\phi\Delta^{\gamma} (\u,\v)\|^{2}_{L^2}\lesssim t^{-\frac{5}{2}}(1+\mathcal{Z}(t)+\mathcal{Z}_{\varepsilon_{0}})(\mathcal{Z}(t)+\mathcal{Z}_{\varepsilon_{0}})\|(\v,\u)\|^{2}_{H^{2\gamma}}.$$
Consequently by integrating on time for $[T_{n},t]$, we obtain
\begin{eqnarray*}
\|(\tilde{v}_{n},\tilde{u}_{n})\|^{2}_{\dot{H}^{s}}\lesssim \int^{t}_{T_{n}}s^{-\frac{5}{2}}(1+\mathcal{Z}(s)+\mathcal{Z}_{\varepsilon_{0}})
(\mathcal{Z}(s)+\mathcal{Z}_{\varepsilon_{0}})^{2}\|(\tilde{v}_{n},\tilde{u}_{n})\|_{\dot{H}^{s}}ds,
\end{eqnarray*}
that is
\begin{eqnarray}\label{high order}
\|\tilde{ z}\|_{\dot{H}^{s+1}}&\lesssim&t^{-\frac{3}{2}}(1+\mathcal{Z}(t)+\mathcal{Z}_{\varepsilon_{0}})(\mathcal{Z}(t)+\mathcal{Z}_{\varepsilon_{0}})^{2}.
\end{eqnarray}
Therefore combining (\ref{low order}) with (\ref{high order}) and by taking supreme norm on $t\in[1,T_{n}]$, we arrive at for all $s>\frac{5}{2}$ and $\alpha=3s+7$
\begin{eqnarray}\label{Z}
\mathcal{Z}(t)&\lesssim&(1+\mathcal{Z}(t)+\mathcal{Z}_{\varepsilon_{0}})(\mathcal{Z}(t)+\mathcal{Z}_{\varepsilon_{0}})^{2}
\end{eqnarray}
and we conclude with (\ref{Z priori}).

\section{Proof of Theorem \ref{thm2.3}}

Let us finish the provements of global existence and uniqueness. The following local in time existence and blow-up criterion for irrotational fluids have been established in \cite{BDDJ}:
\begin{lem}\label{lem local}
Let $u_{0}$ irrotational. If $s>\frac{5}{2}$, then for the initial data $(\rho_{0}-\rho^{*},u_{0})\in H^s$, there exists a positive $T$ such that (\ref{1.1}) admits a unique solution
$(\rho,u)$ such that
\begin{equation}
(\rho-\rho^{*}, u)\in \mathcal{C}([0,T],H^{s+1}\times H^s)\cap\mathcal{C}^{1}([0,T],H^{s-1}\times H^{s-2}).
\end{equation}
\end{lem}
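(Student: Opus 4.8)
The plan is to prove Lemma~\ref{lem local} by the scheme of Benzoni-Gavage--Danchin--Descombes \cite{BDD,BDDJ}: pass to symmetrized unknowns in which the principal part is a skew-adjoint Schr\"odinger-type operator, close a \emph{gauged} energy estimate that removes the apparent loss of one derivative caused by the third-order Korteweg term, and then construct the solution by iteration and compactness. First I would symmetrize. Setting $\ell=\mathcal L(\rho)$ with $\mathcal L$ as in \eqref{def ell}, the map $\mathcal L$ is a smooth diffeomorphism near $\rho^\ast$ (since $\mathcal L'=\sqrt{\kappa/\rho}>0$), so controlling $\ell$ in Sobolev norms is equivalent to controlling $\rho-\rho^\ast$, one derivative being gained because $\nabla\ell\simeq\tfrac{a(\rho)}{\rho}\nabla\rho$ with $a(\rho)=\sqrt{\rho\kappa(\rho)}$. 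As $u_0$ is irrotational and $\mathrm{curl}\,u\equiv0$ is propagated by \eqref{1.1}, I would take $v=\nabla\ell$ and $u$ as unknowns (keeping $\mathrm{curl}\,v=\mathrm{curl}\,u=0$), turning \eqref{1.1} into the extended first-order system with principal part
\begin{equation*}
\partial_t v+\nabla\big(a(\rho)\,\mathrm{div}\,u\big)=\text{l.o.t.},\qquad
\partial_t u-\nabla\big(a(\rho)\,\mathrm{div}\,v\big)=\text{l.o.t.},
\end{equation*}
i.e., with $w=v+iu$, a quasilinear equation $\partial_t w-i\mathcal A(\rho)w=\mathcal N(w,\rho)$ where $\mathcal A(\rho)w=\nabla(a(\rho)\,\mathrm{div}\,w)$ is formally self-adjoint and the remaining terms are the transport pieces $u\cdot\nabla v,\ v\cdot\nabla u,\ u\cdot\nabla u$ and smooth functions of $\ell$ times $v$ (including $\nabla\tilde g(\ell)$), precisely the structure of \eqref{linearized} and \eqref{Energy error}; conversely a curl-free solution of this system gives back a solution of \eqref{1.1} via $\rho=\mathcal L^{-1}(\ell)$.

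The heart of the argument is the a priori estimate. For $\gamma$ with $2\gamma$ just below $s$ I would apply $\Delta^\gamma$ to the two equations and test against $\phi(\rho)^2\Delta^\gamma v$ and $\phi(\rho)^2\Delta^\gamma u$, choosing the scalar gauge $\phi=\phi(\rho)>0$ to solve \eqref{gauge}. Commuting $\phi\Delta^\gamma$ past $\mathcal A(\rho)$ and past the transport terms leaves, besides harmless lower-order commutators, precisely the top-order combination $\big(\phi v+2\gamma\phi\,\nabla a(\rho)-2a(\rho)\nabla\phi\big)\Delta^\gamma\mathrm{div}\,w$; by \eqref{gauge} together with $v=\tfrac{a(\rho)}{\rho}\nabla\rho$ this vanishes, and one further integration by parts cancels the two surviving $\mathcal A(\rho)$-contributions against each other thanks to the sign asymmetry between the two equations. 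This gauge construction --- identical to the device used around \eqref{gauge} in Section~4 --- is where I expect the only genuine difficulty: without it the estimate loses a derivative both from the quasilinearity of $a(\rho)$ and from the third-order Korteweg term. What remains is
\begin{equation*}
\frac{d}{dt}\big\|\phi\Delta^\gamma(v,u)\big\|_{L^2}^2\lesssim \Phi\big(\|(v,u)\|_{L^\infty}\big)\big(1+\|\nabla v\|_{L^\infty}+\|\nabla u\|_{L^\infty}\big)\big\|\Delta^\gamma(v,u)\big\|_{L^2}^2,
\end{equation*}
together with the analogous $L^2$ bound; since $\phi\simeq1$ and $s>\tfrac52$, Sobolev embedding controls the $L^\infty$ norms by $\|(v,u)\|_{H^s}$, and a comparison ODE yields a time $T>0$ depending only on $\|(\rho_0-\rho^\ast,u_0)\|_{H^s}$ and on a lower bound $\inf\rho_0>0$ (so that $a,\phi$ stay nondegenerate) on which $\|(v,u)(t)\|_{H^s}$ remains controlled.

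Finally I would construct the solution and read off its regularity. I would iterate at the linear level: given $(\rho^{(k)},v^{(k)},u^{(k)})$, solve the linear Schr\"odinger-type system for $(v^{(k+1)},u^{(k+1)})$ with frozen self-adjoint principal part $\mathcal A(\rho^{(k)})$ --- classical by the same energy method --- and recover $\rho^{(k+1)}$ from the continuity equation; the gauged estimate, uniform in $k$, gives an $L^\infty_T H^s$ bound for the iterates, and an $L^2$ estimate for the differences makes them Cauchy in $\mathcal C([0,T],L^2)$, so interpolation with the uniform bound gives convergence in every $H^{s'}$, $s'<s$, hence a solution with $(v,u)\in L^\infty_T H^s$; uniqueness follows from the same difference estimate. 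Upgrading time continuity to $\mathcal C([0,T],H^s)$ (Bona--Smith regularization, or continuity of the energy norm), inverting $\mathcal L$ with the composition estimate $\|F(\ell)\|_{H^{s+1}}\lesssim C(\|\ell\|_{L^\infty})\|\ell\|_{H^{s+1}}$ gives $\rho-\rho^\ast\in\mathcal C([0,T],H^{s+1})$ with $\rho>0$ after possibly shrinking $T$; reading the equations, $\partial_t\rho=-\mathrm{div}(\rho u)\in\mathcal C([0,T],H^{s-1})$ and, the worst term being $\nabla(\kappa(\rho)\Delta\rho)\in H^{s-2}$, $\partial_t u\in\mathcal C([0,T],H^{s-2})$, which is the asserted regularity. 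Iterating the a priori estimate also yields the continuation criterion quoted alongside the statement, namely persistence as long as $\int_0^T(\|\nabla u\|_{L^\infty}+\|\nabla^2\rho\|_{L^\infty})\,dt<\infty$.
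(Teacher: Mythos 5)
Your proposal is correct and follows exactly the symmetrization-plus-gauge-function energy scheme of Benzoni-Gavage--Danchin--Descombes, which is where the paper takes this lemma from: the paper itself states Lemma~\ref{lem local} as a quoted result from the literature and gives no proof, while re-using the same gauge device \eqref{gauge} in its own Section~4 estimates. The only point worth flagging is the mismatch already present in the statement (initial data asserted in $H^s$ but the solution in $H^{s+1}\times H^s$); your choice to work with $(v,u)=(\nabla\ell,u)\in H^s$, i.e.\ $\rho_0-\rho^\ast\in H^{s+1}$, is the correct reading.
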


\begin{lem}\label{lem blow}
Let $u_{0}$ irrotational. If $s>\frac{5}{2}$, then the solution of (\ref{1.1}) $(\rho,u)$ on $[0,T)$ could continue beyond $T$ once the following conditions satisfy:
\begin{itemize}
\item $\rho([0,T)\times \mathbb{R}^3)\subset J\subset \mathbb{R}^+$ where J is compact.
\item $\int^{T}_{0}\|\Delta\rho(t)\|_{L^\infty}+\|\mathrm{div}u(t)\|_{L^\infty}dt$ is bounded.
\end{itemize}
\end{lem}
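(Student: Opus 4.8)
The plan is to prove Lemma~\ref{lem blow} by a Beale--Kato--Majda type continuation argument built on the symmetrised energy estimate of Section~4, now run without the approximate/final--data splitting. Assume $u_{0}=\nabla\psi_{0}$, so the flow stays irrotational on $[0,T)$, and work with $(v,u)=(\nabla\ell,u)$, $\ell=\mathcal{L}(\rho)$; this pair solves the system obtained by applying $\nabla$ to the first line of \eqref{linearized}, i.e.\ \eqref{Energy error} with $(\tilde v,\tilde u)$ replaced by $(v,u)$ and the right--hand sides taken in full nonlinear form. The first hypothesis --- $\rho([0,T)\times\R^{3})$ contained in a compact $J\subset\R^{+}$ --- keeps $\rho$ bounded away from $0$ and $\infty$, so $a(\rho)=\sqrt{\rho\kappa(\rho)}$, $a(\rho)/\rho$, $\tilde a$, $\tilde g$ and the gauge $\phi(\rho)$ solving the ODE \eqref{gauge} (for the fixed $\gamma$; with $\Delta^{\gamma}$ replaced by $\Lambda^{s}$ and Kato--Ponce commutators when $s$ is not an even integer) are smooth on $J$ with all derivatives bounded, $a(\rho)\ge c_{J}>0$ and $c_{J}\le\phi(\rho)\le C_{J}$. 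By the product and composition estimates recalled in Section~2 this makes $\|(v,u)\|_{L^{2}}+\|\phi\,\Lambda^{s}(v,u)\|_{L^{2}}$ equivalent to $\|(v,u)\|_{H^{s}}$, and $\|(v,u)\|_{H^{s}}$ equivalent to $\|(\rho-\rho^{*},u)\|_{H^{s+1}\times H^{s}}$; hence it suffices to bound $\|(v,u)(t)\|_{H^{s}}$ on $[0,T)$.

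Second, the energy inequality. Applying $\phi\,\Lambda^{s}$ and pairing in $L^{2}$ exactly as in Section~4: the transport terms $u\cdot\nabla v$, $u\cdot\nabla u$ give, after a commutator estimate and one integration by parts, a contribution $\lesssim_{J}(1+\|\nabla u\|_{L^{\infty}})\|(v,u)\|_{H^{s}}^{2}$; the derivative--losing Korteweg term $\nabla(a(\rho)\,\mathrm{div}\,u)$ paired against $v$, together with the cross term $v\cdot\nabla u$ and their symmetric counterparts in the $u$--equation, is handled as in \eqref{gauge}--\eqref{111}: the antisymmetric structure cancels the top--order piece and the choice of $\phi$ the sub--leading one, leaving only terms with at most $s$ derivatives on $(v,u)$ and at most two on $\rho$; the chemical--potential term $\tilde g(\ell)\ell$ and the remaining lower--order pieces are controlled by the composition estimates and the $J$--dependent coefficient bounds (and the accompanying $L^{2}$ estimate is easier, the top--order Korteweg terms cancelling directly on integration by parts). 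This should yield
\begin{equation*}
\frac{d}{dt}\|(v,u)\|_{H^{s}}^{2}\ \le\ C_{J}\bigl(1+\|\nabla u\|_{L^{\infty}}+\|\nabla^{2}\rho\|_{L^{\infty}}+\|\nabla\rho\|_{L^{\infty}}\bigr)\|(v,u)\|_{H^{s}}^{2}.
\end{equation*}

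The step I expect to be the real obstacle is replacing the right--hand quantities by the two scalars actually assumed integrable, $\mathrm{div}\,u=\Delta\psi$ and $\Delta\rho$. Irrotationality is decisive here: $u=\nabla\psi$ and $v=\tfrac{a(\rho)}{\rho}\nabla\rho$ are curl--free, so $\nabla u=R\otimes R\,(\mathrm{div}\,u)$ and $\nabla^{2}\rho=R\otimes R\,(\Delta\rho)$ with $R$ the Riesz transform; since $R$ is not bounded on $L^{\infty}$ one invokes the Brezis--Gallouet--Wainger / Beale--Kato--Majda logarithmic inequality, which for $s>\tfrac{5}{2}$ gives
\begin{equation*}
\|\nabla u\|_{L^{\infty}}+\|\nabla^{2}\rho\|_{L^{\infty}}\ \le\ C_{J}\Bigl(1+\bigl(\|\mathrm{div}\,u\|_{L^{\infty}}+\|\Delta\rho\|_{L^{\infty}}\bigr)\log\!\bigl(e+\|(v,u)\|_{H^{s}}\bigr)+\|(v,u)\|_{L^{2}}\Bigr).
\end{equation*}
The zeroth--order quantity $\|\nabla\rho\|_{L^{\infty}}\sim_{J}\|v\|_{L^{\infty}}$ must be tracked by a companion transport estimate: the first line of \eqref{linearized} gives $\tfrac{D\ell}{Dt}=-(1+\tilde a(\ell))\,\mathrm{div}\,u$ along the flow, and differentiating it (again using $\mathrm{curl}\,u=0$, so $\nabla\mathrm{div}\,u=\Delta u$) leads to a Gr\"onwall bound for $\|v\|_{L^{\infty}}$ in terms of $\int_{0}^{t}(\|\mathrm{div}\,u\|_{L^{\infty}}+\|\nabla u\|_{L^{\infty}})$ and a Sobolev norm of $u$, so that the estimates for $\|(v,u)\|_{H^{s}}$, $\|\nabla u\|_{L^{\infty}}$ and $\|v\|_{L^{\infty}}$ close as a coupled system.

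Finally, inserting the logarithmic bound and absorbing the coupled lower--order terms, with $Y(t)=e+\|(v,u)(t)\|_{H^{s}}^{2}$ and $h(t)=\|\mathrm{div}\,u(t)\|_{L^{\infty}}+\|\Delta\rho(t)\|_{L^{\infty}}\in L^{1}(0,T)$, the energy inequality becomes $Y'\le C_{J}\bigl(1+h(t)\log Y\bigr)Y$, so by Gr\"onwall applied to $\log\log Y$,
\begin{equation*}
Y(t)\ \le\ Y(0)^{\exp\!\bigl(C_{J}(T+\int_{0}^{T}h(s)\,ds)\bigr)}\ <\ \infty,\qquad t\in[0,T).
\end{equation*}
Hence $(\rho-\rho^{*},u)$ stays bounded in $H^{s+1}\times H^{s}$ as $t\to T^{-}$; choosing $t_{0}<T$ close enough to $T$ and applying Lemma~\ref{lem local} with data $(\rho(t_{0})-\rho^{*},u(t_{0}))$ --- whose existence time depends only on the size of the data --- produces a solution on an interval strictly containing $T$, which is the asserted continuation. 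The genuinely delicate work lies in the third step: verifying that after the gauge cancellations every surviving term in the energy estimate is dominated by $h(t)\log Y$ plus quantities already controlled, so that the coupled system really closes --- this is precisely where irrotationality, the compactness of the range of $\rho$, and the largeness of $s$ are all used, and it is essentially the a priori estimate underlying \cite{BDD}.
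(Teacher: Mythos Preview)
The paper does not prove Lemma~\ref{lem blow} at all: it is simply quoted from the literature, introduced with ``The following local in time existence and blow-up criterion for irrotational fluids have been established in \cite{BDDJ}'' (the multi-dimensional version being \cite{BDD}). So there is no proof here to compare against; your task was effectively to reconstruct the argument of Benzoni-Gavage--Danchin--Descombes.

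Your sketch is broadly the right one and tracks the strategy of \cite{BDD}: the gauged $H^{s}$ energy with $\phi$ solving \eqref{gauge} to kill the sub-principal term, Kato--Ponce commutators for the transport pieces, and a BKM-type logarithmic inequality to trade $\|\nabla u\|_{L^{\infty}}$ and $\|\nabla^{2}\rho\|_{L^{\infty}}$ for the integrable scalars $\|\mathrm{div}\,u\|_{L^{\infty}}$ and $\|\Delta\rho\|_{L^{\infty}}$. The double-exponential Gr\"onwall closure is the standard endpoint. Two remarks on the execution: first, the separate transport estimate you propose for $\|v\|_{L^{\infty}}=\|\nabla\ell\|_{L^{\infty}}$ can be avoided, since the same logarithmic inequality (applied to the curl-free vector $v$ itself, with $\mathrm{div}\,v$ controlled by $\Delta\rho$ via the compactness of $J$) already feeds $\|\nabla\rho\|_{L^{\infty}}$ back into the main loop --- your coupled-system formulation works but is slightly heavier than needed. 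Second, your assertion that the local existence time in Lemma~\ref{lem local} depends only on the \emph{size} of the data (and not on the data itself) is exactly what is required for the continuation step and is established in \cite{BDD}; it is worth flagging that this uniformity is itself nontrivial and relies on the same gauged energy. Overall the proposal is sound and faithful to the cited source.
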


According to (\ref{Z priori}), a small enough $\varepsilon_{0}\ll1$ would ensure the uniform bound of $\mathcal{Z}(t)$ on $[1,T_{n}]$. We also state that the boundedness of $\mathcal{Z}(t)$ ensures that
\begin{multline}
\int^{T_{n}}_{1}\|\Delta\rho(t)\|_{L^\infty}+\|\mathrm{div}u(t)\|_{L^\infty}dt\lesssim\int^{T_{n}}_{1}\|\Delta z\|_{L^\infty}dt\\
\lesssim\int^{T_{n}}_{1}\|\tilde{z}\|_{\dot{H}^\frac{7}{2}}+\|z_{1}\|_{\dot{W}^{\frac{5}{2},6}}+\|z_{2}\|_{\dot{H}^\frac{7}{2}}dt\lesssim C_{\mathcal{Z}_{\varepsilon_{0}}}
\end{multline}
provided $s\geq\frac{5}{2}$. Therefore by local theory and blow-up criteria, we prove the existence and uniqueness on $[1,T_{n}]$.

We take the limit of $T_{n}$ where $n\rightarrow\infty$ and apply Lemma \ref{lem local} on $t=1$, then the solution is uniquely extended on $[0,+\infty]$. Moreover, the continuity of the solution is got by a Bona-Smith argument. As for the scattering property, we start with the density, by the definition of $\ell$ and Taylor extension, we have
$$\rho-1=\mathcal{L}^{-1}(\ell)-1=\delta\ell+\bar{\mathcal{L}}(\ell)\ell.$$
where $\delta={(\mathcal{L}^{-1})'}(0)$. Therefore, there holds
\begin{eqnarray}
\|\rho-1-\delta\mathrm{Re}e^{it\Delta}\varphi\|_{L^2}&\lesssim&\|\ell-\mathrm{Re}e^{it\Delta}\varphi\|_{L^2}+\|\ell\|_{L^2}\|\ell\|_{L^\infty}.
\end{eqnarray}
Keep in mind Lemma \ref{w2} that for $t>0$
\begin{eqnarray}
\|\ell-\mathrm{Re}e^{it\Delta}\varphi\|_{L^2}&\lesssim&\|\mathrm{Re} \tilde{z}\|_{L^2}+\|\mathrm{Re} z_{2}\|_{L^2}\lesssim t^{-\frac{3}{2}}
\end{eqnarray}
while
\begin{eqnarray}
\|\ell\|_{L^\infty}&\lesssim&\|\mathrm{Re} \tilde{z}\|_{\dot{H}^\frac{3}{2}}+\|\mathrm{Re} z_{1}\|_{\dot{W}^{\frac{1}{2},6}}+\|\mathrm{Re} z_{2}\|_{\dot{H}^\frac{3}{2}}\lesssim t^{-\frac{3}{2}}.
\end{eqnarray}
We could conclude with
\begin{eqnarray}
\|\rho-1-\delta\mathrm{Re}e^{it\Delta}\varphi\|_{L^2}&\lesssim&t^{-\frac{3}{2}}.
\end{eqnarray}
On the other hand, for velocity, we similarly have
\begin{eqnarray}
\|u-\mathrm{Im}e^{it\Delta}\nabla\varphi\|_{L^2}\lesssim\|\nabla(\psi-\mathrm{Im}e^{it\Delta}\varphi)\|_{L^2}&\lesssim&t^{-\frac{3}{2}}.
\end{eqnarray}

\noindent {\bf Acknowledgments:}\ \

The author sincerely thanks Professor Kenji Nakanishi for helpful suggestions and discussions in the course of this research.  The author is partially supported by ``Muxing" project of Nanjing University of Aeronautics and Astronautics.

\end{document}